\DeclareFontFamily{U}{mathx}{\hyphenchar\font45}
\DeclareFontShape{U}{mathx}{m}{n}{
      <5> <6> <7> <8> <9> <10>
      <10.95> <12> <14.4> <17.28> <20.74> <24.88>
      mathx10
      }{}
\DeclareSymbolFont{mathx}{U}{mathx}{m}{n}
\DeclareMathAccent{\widecheck}{0}{mathx}{"71}
\DeclareMathAccent{\wideparen}{0}{mathx}{"75}
\newtheorem{theorem}{Theorem}
\newtheorem{example}{Example}%
\newtheorem{remark}{Remark}%
\newtheorem{lemma}{Lemma}
\newtheorem{definition}{Definition}%
\def\bibcommenthead{}%
  \def\bibfont{\reset@font\fontfamily{\rmdefault}\normalsize\selectfont}%
  \def\bibfont{\reset@font\fontfamily{\rmdefault}\normalsize\selectfont}%
  \def\bibfont{\reset@font\fontfamily{\rmdefault}\normalsize\selectfont}%
  \def\bibfont{\reset@font\fontfamily{\rmdefault}\normalsize\selectfont}%
  \def\refdoi#1{\urlstyle{rm}\url{#1}}%
  \def\bibfont{\reset@font\fontfamily{\rmdefault}\normalsize\selectfont}%
  \def\bibfont{\reset@font\fontfamily{\rmdefault}\normalsize\selectfont}%
  \def\bibfont{\reset@font\fontfamily{\rmdefault}\normalsize\selectfont}%
  \def\bibfont{\reset@font\fontfamily{\rmdefault}\normalsize\selectfont}%
\def\eqnheadfont{\reset@font\fontfamily{\rmdefault}\fontsize{16}{18}\bfseries\selectfont}%
\title{Toward Fast and Provably Accurate Near-field Ptychographic Phase Retrieval}
\author{Mark Iwen, Michael Perlmutter, and Mark Philip Roach}
\begin{document}









\maketitle

\abstract{Ptychography is an imaging technique that involves a sample being illuminated by a coherent, localized probe of illumination. When the probe interacts with the sample, the light is diffracted and a diffraction pattern is detected. Then the sample (or probe) is shifted laterally in space to illuminate a new area of the sample whilst ensuring sufficient overlap. Similarly, in Fourier ptychography a sample is illuminated at different angles of incidence (effectively shifting the sample's Fourier transform) after which a lens acts as a low-pass filter, thereby effectively providing localized Fourier information about the sample around frequencies dictated by each angle of illumination.  Mathematically, one therefore obtains a similar set of overlapping measurements of the sample in both Fourier ptychography and ptychography, except in the different domains (Fourier for the former, and physical for the latter). In either case, one is then able to reconstruct an image of the sample from the measurements using similar methods.

\indent Near-Field (Fourier) Ptychography (NFP) (see, e.g., \cite{stockmar2013near,stockmar2015x,zhang2019near}) occurs when the sample is placed at a short defocus distance having a large Fresnel number.
In this paper, we prove that certain NFP measurements are robustly invertible (up to an unavoidable global phase ambiguity) for specific Point Spread Functions (PSFs) and physical masks which lead to well-conditioned lifted linear systems. We then apply a block phase retrieval algorithm using weighted angular synchronization and prove that the proposed approach accurately recovers the measured sample for these specific PSF and mask pairs. Finally, we also propose using a Wirtinger Flow for NFP problems and numerically evaluate that alternate approach both against our main proposed approach, as well as with NFP measurements for which our main approach does not apply.
}

\section{Introduction}\label{sec1}

The task of recovering a complex signal $\mathbf{x}\in\mathbb{C}^d$ from phaseless magnitude measurements is called the \textit{phase retrieval problem}. These types of problems appear in many applications such as optics \cite{antonello2015modal,walther1963question} and x-ray crystallography \cite{buhler2005symmetric,liu2012phase}. Here, we are interested in phase retrieval problems arising from 
(Fourier) ptychography \cite{R2008ptychography,zheng2016fourier}.  Ptychography is an imaging technique involving a sample illuminated by a coherent and often localized probe of illumination.  When the probe interacts with the sample, light is diffracted and a diffraction pattern is detected. 
The probe, or the sample, is then shifted laterally in space to illuminate a new area of the sample while ensuring there is sufficient overlap between each neighboring shift. 
The intensity of the diffraction pattern detected at position $\ell$ resulting from the $k^{\rm th}$ shift of the probe along the sample takes the general form of 
\begin{equation}
\label{equ:GeneralModel}
    \tilde{Y}_{k,\ell} = \lvert (D( S_k {\bf m} \circ {\bf x} ))_\ell \rvert^2,
\end{equation}
where ${\bf x} \in \mathbb{C}^d$ is the sample being imaged, ${\bf m} \in \mathbb{C}^d$ is a mask which represents the probe's incident illumination on (a portion of) the sample, $\circ$ denotes the Hadamard (pointwise) product, $S_k$ is a shift operator, and $D: \mathbb{C}^d \rightarrow \mathbb{C}^d$ is a function that describes the diffraction of the probe radiation from the sample to the plane of the detector after possibly passing through, e.g, a lens.  Similarly, Fourier ptychography ultimately results in the same type of measurements as in \eqref{equ:GeneralModel} except with ${\bf m}$ and ${\bf x}$ replaced by $\widehat{\bf m}$ and $\widehat{\bf x}$, respectively (see, e.g., \cite{zheng2021concept}).

Prior work in the computational mathematics community related to (Fourier) ptychographic imaging has primarily focused on Far-Field\footnote{Far-field versus near-field measurements are defined based on the \textit{Fresnel number} of the imaging system.  See, e.g., \cite{jenkins1957fundamentals} for details.} Ptychography (FFP) in which $D$ is the action of a discrete (inverse) Fourier transform matrix (see, e.g., \cite{qian2014efficient,IVW16,IPSV18,fannjiang2020fixed,preskitt2021admissible,Perlmutter2020}) in \eqref{equ:GeneralModel}.  Here, in contrast, we consider the less well studied setting of near-field ptychography (NFP) which describes situations where, e.g., the masked sample is too close to the source/detector to be well described by the FFP model.  See, e.g., \cite{stockmar2013near,stockmar2015x,zhang2019near} for such imaging applications as well as for more detailed related discussions.  In all of these NFP applications the acquired measurements can again be written in the form of \eqref{equ:GeneralModel} where $D$ is now a convolution operator with a given Point Spread Function (PSF) ${\bf p} \in \mathbbm{C}^d$. 

Let $\mathbf{x} \in \mathbb{C}^d$ denote an unknown  sample,
$\mathbf{m} \in \mathbb{C}^d$ be a known mask, and $\mathbf{p} \in \mathbb{C}^d$ be a known PSF, respectively.  For the remainder of this paper we will suppose we have noisy discretized NFP measurements of the form
\begin{align} \label{eqn: noisy near field}
Y_{k,\ell} = Y_{k,\ell} \left( {\bf x} \right) &\coloneqq \lvert (\mathbf{p} * (S_k \mathbf{m} \circ \mathbf{x}))_\ell \rvert^2 + N_{k,\ell}, \quad (k,\ell) \in \mathcal{S} \subseteq [d]_0 \times [d]_0,
\end{align}
where $S_k$ is a circular shift operator $(S_k\mathbf{x})_n=\mathbf{x}_{n+k ~ \text{mod} ~ d}$, $\mathbf{N} = (N_{k,\ell})$ is an additive noise matrix, and $[d]_0 := \{ 0, \dots, d-1 \}$.  Throughout this paper we will always  index vectors and matrices modulo $d$ unless otherwise stated.

\subsection{Results, Contributions, and Contents}

Our main theorem guarantees the existence of a PSF ${\bf p} \in \mathbb{C}^d$ and a locally supported mask ${\bf m} \in \mathbb{C}^d$ with ${\rm supp}(\mathbf{m}) \subseteq [\delta]_0 \coloneqq \{ 0, \dots, \delta - 1 \}$, $\delta \ll d$, for which the measurements \eqref{eqn: noisy near field} can be inverted up to a global phase factor by a computationally efficient and noise robust algorithm.  In particular, we prove the following result which we believe to be the first theoretical error guarantee for a recovery algorithm in the setting of NFP. 

\begin{theorem}[Inversion of NFP Measurements] \label{thm:MainThm}
Choose $\delta \in [d]_0$ such that $2 \delta - 1$ divides $d$. Then, there exists a PSF ${\bf p} \in \mathbb{C}^d$ and a mask ${\bf m} \in \mathbb{C}^d$ with ${\rm supp}(\mathbf{m}) \subseteq [\delta]_0$ such that Algorithm~\ref{NFP-BlockPR} below, when provided with input measurements \eqref{eqn: noisy near field}, will return an estimate $\mathbf{x}_{\text{est}} \in \mathbb{C}^d$ of ${\bf x}$ 
satisfying
\begin{align*}
\underset{\phi \in [0,2\pi)}{\min} \|\mathbf{x}_{\text{est}} - e^{\mathbbm{i}\phi}\mathbf{x}\|_{2} \leq C \bigg( \|\mathbf{x}\|_{\infty}\dfrac{d\sqrt{\delta} \sqrt{\|\mathbf{x}_{\text{est}}\|_{\infty}^2 + \|\mathbf{x}_{\text{est}}\|_{\infty}^3}}{\lvert\mathbf{x}_{\text{est}}\rvert_{\min}^2} \cdot  \|\mathbf{N}\|_{\rm F} + \sqrt{d \delta  \|\mathbf{N}\|_{\rm F}}\bigg).
\end{align*}
Here $C \in \mathbb{R}^+$ is an absolute constant\footnotemark, and $\lvert\mathbf{x}_{\text{est}}\rvert_{\min}$ denotes the smallest magnitude of any entry in $\mathbf{x}_{\text{est}}$.
\end{theorem}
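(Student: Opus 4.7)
The plan is to reduce the NFP measurement model to a structured linear measurement of the rank-one lift $\mathbf{x}\mathbf{x}^*$, invert that linear system on its relevant support, and then extract magnitudes and phases of $\mathbf{x}$ via a block/angular-synchronization step, tracking the noise propagation through each stage.

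First I would expand the squared convolution algebraically to write
\begin{equation*}
|(\mathbf{p}*(S_k\mathbf{m}\circ\mathbf{x}))_\ell|^2
=\sum_{j_1,j_2}p_{\ell-j_1}\overline{p_{\ell-j_2}}\,m_{j_1-k}\overline{m_{j_2-k}}\,x_{j_1}\overline{x_{j_2}},
\end{equation*}
which is linear in the lifted matrix $X\coloneqq \mathbf{x}\mathbf{x}^*$. Because $\mathrm{supp}(\mathbf{m})\subseteq[\delta]_0$, for each shift $k$ only the $\delta\times\delta$ diagonal block of $X$ indexed by $[k,k+\delta-1]$ contributes, so I only need to recover the banded portion of $X$ whose entries lie within distance $\delta-1$ of the diagonal. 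The central design question becomes: can one choose $\mathbf{p}$ and $\mathbf{m}$ so that the linear map from this banded part of $X$ to the measurement array $\{Y_{k,\ell}\}_{(k,\ell)\in\mathcal{S}}$ is well-conditioned?

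Here I would use the divisibility hypothesis $(2\delta-1)\mid d$ to make the system block-circulant in $k$ and diagonalize in the Fourier domain along the $k$-direction, reducing the global $d^2$-dimensional inversion to a family of small ($O(\delta)$-sized) linear systems whose condition numbers can be controlled by explicit choices of the Fourier coefficients $\hat{\mathbf{p}}$ and the support pattern of $\mathbf{m}$. The construction of $\mathbf{p}$ would be chosen so that, after this block-diagonalization, each small system looks essentially like a known well-conditioned far-field-style system (as in the authors' earlier FFP results); the mask $\mathbf{m}$ is built locally so its shifted supports tile $[d]_0$ and its local autocorrelations make each block invertible. Inverting these blocks and collecting the result gives entrywise estimates of $X_{i,j}$ for $|i-j|<\delta$, with the noise amplified by the worst block condition number, which contributes a factor of roughly $\sqrt{d\delta}\,\|\mathbf{N}\|_F$ in the eventual bound.

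Next I would extract magnitudes $|x_i|$ from the diagonal entries $X_{i,i}$ (taking square roots introduces the $\sqrt{\cdot}$ dependence on $\|\mathbf{N}\|_F$ visible in the second term of the theorem), and recover phases from the off-diagonal entries $X_{i,j}/(|x_i||x_j|)$ by running the weighted angular synchronization step of Algorithm~\ref{NFP-BlockPR}; division by $|x_i||x_j|$ is where the $|\mathbf{x}_{\rm est}|_{\min}^{-2}$ factor enters, and the factors $\|\mathbf{x}\|_\infty$ and $\|\mathbf{x}_{\rm est}\|_\infty$ arise from bounding perturbations of normalized rank-one blocks. Assembling magnitudes and phases, and accounting for the unavoidable global phase, produces the stated bound once a standard synchronization perturbation estimate (of the sort already developed in the authors' FFP work) is plugged in.

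The principal obstacle I anticipate is the joint design of $\mathbf{p}$ and $\mathbf{m}$ so that all of the following hold simultaneously: the lifted linear system restricted to banded matrices is invertible, its blocks are uniformly well-conditioned in $\delta$ and $d$, and the mask remains supported in $[\delta]_0$. The block-circulant / Fourier-diagonalization viewpoint should make this tractable, but verifying a uniform condition-number bound (rather than merely invertibility) is the delicate step, and it is what determines the explicit prefactors $d\sqrt{\delta}$ and $\sqrt{d\delta}$ in the final error estimate. Once that is in hand, the remaining magnitude-extraction and angular-synchronization bounds are largely mechanical perturbation estimates.
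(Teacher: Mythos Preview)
Your high-level architecture (lift to $\mathbf{x}\mathbf{x}^*$, invert on the $\delta$-band, then synchronize) matches the paper, but two of the load-bearing steps are not as you describe them, and one of your ``mechanical'' steps is in fact where most of the work lies.

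\textbf{Design of $\mathbf{p}$ and $\mathbf{m}$.} The paper does not obtain well-conditioned blocks by Fourier-diagonalizing in $k$ and tuning $\widehat{\mathbf{p}}$. Instead it rewrites each NFP sample as an FFP sample: one shows $Y_{k,\ell}=|\langle \overline{S_\ell\widetilde{\mathbf{p}}\circ\mathbf{m}},\,S_{-k}\mathbf{x}\rangle|^2$, so that the convolution by $\mathbf{p}$ is absorbed into an \emph{effective mask} $\widecheck{\mathbf{m}}_\ell^{(p,m)}=\overline{S_\ell\widetilde{\mathbf{p}}\circ\mathbf{m}}$. For this to produce exactly $2\delta-1$ distinct effective masks indexed by $\ell\in[2\delta-1]_0$, one needs $\mathbf{p}$ to be $(2\delta-1)$-periodic, which is the real role of the divisibility hypothesis. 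The explicit choice is then a quadratic chirp $p_n=e^{-2\pi i n^2/(2\delta-1)}$ together with an exponential mask $m_n$; a short computation shows $\widecheck{\mathbf{m}}_\ell^{(p,m)}$ equals, up to a unimodular scalar and the permutation $\ell\mapsto 2\ell\bmod(2\delta-1)$, the known exponential FFP masks of \cite{IVW16}. Hence the lifted operator $\widecheck{\mathbf{M}}^{(p,m)}$ is a row permutation of the already-analyzed $\widecheck{\mathbf{M}}^{(\rm fpr)}$ and inherits $\sigma_{\min}\gtrsim 1/\delta$. Your plan leaves this construction entirely open and proposes a different (Fourier-in-$k$) mechanism that would not by itself produce the reduction to the known well-conditioned system.

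\textbf{Angular synchronization is not ``standard'' here.} You write that the synchronization step uses ``a standard perturbation estimate of the sort already developed in the authors' FFP work.'' Those earlier estimates give error scaling like $d^2\|\mathbf{N}\|_F$, not the $d\|\mathbf{N}\|_F$ appearing in the theorem. The linear-in-$d$ dependence is obtained by invoking the weighted-Laplacian synchronization bound of Filbir--Krahmer--Melnyk, $\min_\phi\|\mathbf{x}^{(\theta)}_{\rm est}-e^{i\phi}\mathbf{x}^{(\theta)}\|_2\lesssim\sqrt{1+\|\mathbf{x}_{\rm est}\|_\infty}\,\|\mathbf{X}-\widehat{\mathbf{X}}\|_F/\sqrt{\tau_G}$, and then proving a lower bound $\tau_G\gtrsim |\mathbf{x}_{\rm est}|_{\min}^4\,\|\mathbf{x}_{\rm est}\|_\infty^{-2}\,\delta/d^2$ for the spectral gap of the weighted graph with weights $W_{i,j}=|\widehat{X}_{i,j}|^2$ (this requires a separate argument in spectral graph theory, Appendix~B of the paper). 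That spectral-gap bound, not entrywise division by $|x_i||x_j|$, is the source of the $|\mathbf{x}_{\rm est}|_{\min}^{-2}$ and the $d\sqrt{\delta}$ prefactor; combining it with $\|\mathbf{X}-\widehat{\mathbf{X}}\|_F\le C\delta\|\mathbf{N}\|_F$ gives the first term of the theorem. Treating this as routine would leave you with the weaker $d^2$-bound.
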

\footnotetext{In this paper we will use $C$ to denote absolute constants which may change from line to line.}
Looking at Theorem~\ref{thm:MainThm} we can see, e.g., that in the noiseless setting where $\|\mathbf{N}\|_{\rm F} = 0$ the output $\mathbf{x}_{\text{est}}$ of Algorithm~\ref{NFP-BlockPR} is guaranteed to match the measured signal ${\bf x}$ up to a global phase factor whenever $\mathbf{x}_{\text{est}}$ has no zeros.\footnote{Note that prior work on far-field ptychography assumed that ${\bf x}$ itself was non-vanishing (see e.g. \cite{IVW16,IPSV18}).  However, requiring $\mathbf{x}_\text{est}$ to not vanish is  more easily verifiable in practice.}  Moreover, the method is also robust to small amounts of additive noise.  The proof of Theorem~\ref{thm:MainThm} consists of two parts:  First, in Section~\ref{sec:NearfromFar}, we show that a specific PSF and mask choice results in NFP measurements \eqref{eqn: noisy near field} which are essentially equivalent to far-field ptychographic measurements \eqref{equ:FFPmeasurements} that are known to be robustly invertible by prior work \cite{IVW16,IPSV18,preskitt2021admissible}.   This guarantees the existence of PSFs and masks which allow for the robust inversion of \eqref{eqn: noisy near field} up to a global phase. However, these prior works all prove error bounds on $\underset{\phi \in [0,2\pi)}{\min} \|\mathbf{x}_{\text{est}} - e^{\mathbbm{i}\phi}\mathbf{x}\|_{2}$ which scale {\it quadratically} in $d$ (see, e.g., Corollary 3 in \cite{IPSV18} and Theorem 1 in \cite{preskitt2021admissible}).  This motivates the second part of the proof in  Section~\ref{sec:ImprovedBlockPRanalysis}, where we improve these results so that they only depend {\it linearly} on $d$.  This is achieved by utilizing weighted angular synchronization error bounds from \cite{filbir2021recovery} which require, among other things, updated lower bounds for the second smallest eigenvalue of the unnormalized graph Laplacian of a given weighted graph obtained from $\mathbf{x}$ (derived in Section~\ref{sec:ImprovedBlockPRanalysis} with the help of auxiliary results proven in Appendix~\ref{secB}). 
We also note that  the improved dependence on $d$ proven in Section~\ref{sec:ImprovedBlockPRanalysis} for the FFP methods previously analyzed in \cite{IVW16,IPSV18,preskitt2021admissible} may be of potential independent interest.

Theorem~\ref{thm:MainThm} is proven for a specific $(2 \delta -1)$-periodic PSF $\mathbf{p}$ and locally supported mask $\mathbf{m}$ whose induced lifted linear measurement operator (see \eqref{eqn: block-circulant matrix M} -- \eqref{eqn: Y equation} below together with Lemma~\ref{lem: rewrite}) is provably well conditioned.  See Lemma \ref{lem: Choice of p and m} for the definition of this particular $\mathbf{p},\mathbf{m}$ pair as well as for their measurements' related condition number bound.  However, we note that Algorithm~\ref{NFP-BlockPR} is guaranteed to work well much more generally for {\it any} PSF and mask pair which leads to well-conditioned measurements (up to, at worst, potentially having to change the shift and frequency pairs one samples if, e.g., $\mathbf{p}$ is not periodic -- see Remark~\ref{rmk: full rewritten}).  Indeed, inspecting the proof of Theorem \ref{thm:MainThm} we see that Lemma \ref{lem: Full Error Bound} decomposes the total error of Algorithm~\ref{NFP-BlockPR}, $\min_{\phi \in [0,2\pi)} \hspace{-.05in} \left\|\mathbf{x} - e^{\mathbbm{i}\phi}\mathbf{x}_{\text{est}}\right\|_{2},$ into terms involving the phase error, $\min_{\phi \in [0,2\pi)} \hspace{-.03in} \|\mathbf{x}^{(\theta)}_{\text{est}} - e^{\mathbbm{i}\phi}\mathbf{x}^{(\theta)} \|_{2}$, and the magnitute error, $\| \mathbf{x}^{(\text{mag})} -  \mathbf{x}^{(\text{mag})}_{\text{est}} \|_{2}$.  The phase error is controlled by Theorem \ref{thm: Improved Phase Bound} and Lemma \ref{lem: Frobenius Bound}. The proof of these results only depends on the choice of $\mathbf{m}$ and $\mathbf{p}$ through $\sigma_{\rm min}(\widecheck{\mathbf{M}}^{(p,m)})$, the minimal singular value of their induced measurement operator. Similarly, the magnitude error is controlled by Lemma \ref{Mag Diff Error} and Theorem \ref{thm: Particular Full Error Bound} which also only depend on $\mathbf{m}$ and $\mathbf{p}$ through $\sigma_{\rm min}(\widecheck{\mathbf{M}}^{(p,m)})$. Therefore, variants of these results can be derived for any invertible measurement system.   Moreover, numerical experiments demonstrate that the proposed method works well for a wide variety of non-vanishing PSF and locally supported mask pairs.  

In order to be able to handle even more general PSFs $\mathbf{p}$ which do however, e.g., vanish, in Section \ref{sec:WTF4NFP} we also propose a Wirtinger Flow based algorithm, Algorithm \ref{NFP Wirtinger Flow}, for inverting NFP measurements \eqref{eqn: noisy near field}.   Though slower than Algorithm~\ref{NFP-BlockPR} and less well supported by theory for the PSF and mask pairs for which both methods work empirically, Algorithm \ref{NFP Wirtinger Flow} generally appears more flexible and, e.g., also requires fewer shifts than Algorithm \ref{NFP-BlockPR} to work well in practice when a given mask is not locally supported.
 Similar to Algorithm \ref{NFP-BlockPR}, Algorithm \ref{NFP Wirtinger Flow} relies on the observation that the NFP measurements \eqref{eqn: noisy near field} are essentially equivalent to FFP measurements as shown in Section \ref{sec:NearfromFar}.  
 In Section~\ref{sec: Numerics}, we evaluate Algorithm~\ref{NFP-BlockPR} and Algorithm \ref{NFP Wirtinger Flow}, 
  numerically, both individually and in comparison to one another in the case of locally supported masks.  Finally, in Section~\ref{sec:Conc}, we conclude with a brief discussion of future work.

\section{Preliminaries:  Prior Results for Far-Field Ptychography using Local Measurements}
\label{sec:Preliminaries}
\begin{table}[htp]
\renewcommand\arraystretch{1.3}
\begin{center}
\begin{minipage}{\textwidth}
\caption{Notational Reference Table}\label{Notational Reference Table}%
\begin{tabular}{@{}lll@{}}
\toprule
\textbf{Notation} & \textbf{Definition} & \textbf{Notes}\\
\hline
$[n]_0$ & $[n]_0 = \{0,1,2, \ldots, n-1\}$ & Zero indexing\\
\hline
$(\mathbf{x})_n$ & $\mathbf{x} \in \mathbb{C}^d, (\mathbf{x})_n = x_{n \; \text{mod} \; d}$ & Vector circular indexing\\
\hline
$(\mathbf{A})_{i,j}$ & $\mathbf{A} \in \mathbb{C}^{m \times n}, (\mathbf{A})_{i,j} = A_{i \; \text{mod} \; m, j \; \text{mod} \; n}$ & Matrix circular indexing\\
\hline
$\langle \mathbf{x}, \mathbf{y} \rangle$ & $\langle \mathbf{x}, \mathbf{y} \rangle = \sum_{n=0}^{d-1} x_n \overline{y}_n = \mathbf{y}^*\mathbf{x}$ & Complex inner product\\
\hline
$\text{supp}(\mathbf{x})$ & $\text{supp}(\mathbf{x}) = \{n \in [d]_0 \mid x_n \neq 0\}$ & Support\\
\hline
$\mathbf{F}_d$ & $(\mathbf{F}_d)_{j,k} = e^{-2\pi \mathbbm{i} j k/d}, \forall (j,k) \in [d]_0 \times [d]_0$ & Discrete Fourier transform matrix\\
\hline
$\widehat{\mathbf{x}}$ & $\widehat{x}_n = (\mathbf{F}_d \mathbf{x})_n = \sum_{k=0}^{d-1} x_k e^{-2\pi \mathbbm{i} nk/d}$ & Discrete Fourier transform\\
\hline
$\mathbf{F}_{d}^{-1} \mathbf{x}$ & $ (\mathbf{F}_{d}^{-1} \mathbf{x})_n = \frac{1}{d} \sum_{k=0}^{d-1} x_k e^{2\pi \mathbbm{i} k n/d}$ & Discrete inverse Fourier transform\\
\hline
$S_k(\mathbf{x})$ & $(S_k \mathbf{x})_n = x_{(n + k) \; {\rm mod} \; d}, \quad \forall n \in [d]_0$ & Circular shift\\
\hline
$\widetilde{\mathbf{x}}$ & $\widetilde{x}_n = x_{-n \; {\rm mod} \; d}, \quad \forall n \in [d]_0$ & Reversal\\
\hline
$\mathbf{x} * \mathbf{y}$ & $(\mathbf{x} * \mathbf{y})_n = \sum_{k=0}^{d-1} x_k y_{n-k}$ & Circular convolution\\
\hline
$\mathbf{x} \circ \mathbf{y}$ & $(\mathbf{x} \circ \mathbf{y})_n = x_n y_n$ & Hadamard (pointwise) product\\
\hline
\end{tabular}
\end{minipage}
\end{center}
\end{table}

Our method, described in  Algorithm~\ref{NFP-BlockPR}, 
is based on relating the near-field ptychographic measurements \eqref{eqn: noisy near field} to far-field ptychographic measurements of the form
\begin{equation}\label{eqn: ffp measurements}
\widetilde{Y}_{k,\ell} = \widetilde{Y}_{k,\ell}\left({\bf x} \right) \coloneqq \left\lvert \sum_{n=0}^{d-1} m'_n x_{n+k} e^{-2\pi \mathbbm{i} \ell n/d}\right\rvert^2 + N_{k,\ell},
\end{equation}
where $\mathbf{m}'$ is a compactly supported mask. 
  If we let $(\widecheck{\mathbf{m}}_\ell)_n \coloneqq \overline{m'_n} e^{-2\pi \mathbbm{i} \ell n/d}$, then these measurements can be written as 
\begin{align}  \label{equ:FFPmeasurements}
\widetilde{Y}_{k,\ell} = \lvert\langle \widecheck{\mathbf{m}}_{\ell} , S_{k}\mathbf{x} \rangle\rvert^2 + N_{k,\ell},
\end{align}
where as above $S_k$ denotes a circular shift of length $k$, i.e., $(S_k\mathbf{x})_n=x_{(n+k)~{\rm mod}~d}$.  In \cite{IVW16}, phase retrieval measurements of this form 
are studied when $\mathbf{m}'$ is supported in an interval of length $\delta$ for some $\delta\ll d$. The fast phase retrieval (fpr) method used there relies on using a 
 lifted linear system involving a block-circulant matrix to recover a portion of the autocorrelation matrix $\mathbf{x}\mathbf{x}^*$. Specifically, letting $D \coloneqq d(2\delta-1)$, the authors define a block-circulant matrix $\widecheck{\mathbf{M}} \in \mathbb{C}^{D \times D}$ by 
\begin{align} \label{eqn: block-circulant matrix M}
\widecheck{\mathbf{M}} \coloneqq
\begin{pmatrix}
\widecheck{\mathbf{M}}_0 & \widecheck{\mathbf{M}}_1 & \dots & \widecheck{\mathbf{M}}_{\delta-1} & 0 & 0 & \dots & 0\\
0 & \widecheck{\mathbf{M}}_0  & \widecheck{\mathbf{M}}_1 & \dots & \widecheck{\mathbf{M}}_{\delta-1} & 0 & \dots & 0\\
\vdots & \vdots & \vdots & \ddots & \ddots & \vdots & \vdots & \vdots\\
\widecheck{\mathbf{M}}_1 & \dots & \widecheck{\mathbf{M}}_{\delta-1} & 0 & 0 & 0 & \dots & \widecheck{\mathbf{M}}_0\\
\end{pmatrix}.
\end{align}
where the matrices $\widecheck{\mathbf{M}}_k \in \mathbb{C}^{(2\delta - 1) \times (2\delta - 1)}$ are defined entry-wise by 
\begin{align}\label{eqn: Mcheck_k} 
(\widecheck{\mathbf{M}}_k)_{\ell j} \coloneqq
\begin{cases}
(\widecheck{\mathbf{m}}_\ell)_{k} \overline{(\widecheck{\mathbf{m}}_\ell)}_{j + k}, \quad &0 \leq j \leq \delta - k,\cr
(\widecheck{\mathbf{m}}_\ell)_{k} \overline{(\widecheck{\mathbf{m}}_\ell)}_{j + k - 2\delta}, \quad &2\delta - 1 + k \leq j \leq 2\delta - 2 \;  \text{and} \; k < \delta,\cr
0, \quad &\text{otherwise.}\cr
\end{cases}
\end{align}
Letting $\mathbf{z}\in\mathbb{C}^d$ be a vector obtained by subsampling appropriate entries of  $\text{vec}(\mathbf{x}\mathbf{x}^*)$, the authors show that, in the noiseless setting, \begin{equation}\label{eqn: Y equation}\text{vec}(\widetilde{\mathbf{Y}}) = \widecheck{\mathbf{M}}\mathbf{z}, \quad \widetilde{\mathbf{Y}} \in \mathbb{C}^{d \times (2\delta-1)}.\end{equation}
(See Equation (9) of \cite{IVW16} for explicit details on the arrangement of the entries.)
%
For properly chosen $\mathbf{m}$, the matrix $\widecheck{\mathbf{M}}$ is invertible, and therefore one may solve for $\mathbf{z}$ by multiplying by $\widecheck{\mathbf{M}}^{-1}$, i.e., $\mathbf{z} = \widecheck{\mathbf{M}}^{-1}\text{vec}(\mathbf{Y})$. 
Then, one may reshape $\mathbf{z}$ to recover a $d\times d$ matrix  $\widehat{\mathbf{X}}$ whose non-zero entries are estimates of the autocorrelation matrix $\mathbf{x}\mathbf{x}^*$. One may then obtain a vector $\mathbf{x}_{\text{est}}$ which approximates  $\mathbf{x}$ by  angular synchronization procedure such as the eigenvector-based method which we will discuss in Section \ref{sec: Eigenvector-Based Angular Synchronization}.\\
\\
In \cite{IVW16}, it is shown that exponential masks $\widecheck{\mathbf{m}}_{\ell}^{(\text{fpr})}$ defined by
\begin{align} \label{equ:Goodmasks}
(\widecheck{\mathbf{m}}_{\ell}^{(\text{fpr})})_n = 
\begin{cases}
\frac{e^{-(n+1)/a}}{\sqrt[\leftroot{-2}\uproot{2}4]{2\delta - 1}} \cdot e^{\frac{2\pi \mathbbm{i} n\ell}{2\delta - 1}}, &n \in [\delta]_0\cr
0, \quad &\text{otherwise}\cr
\end{cases},
 \quad a\coloneqq \max\bigg\{ 4, \dfrac{\delta - 1}{2}\bigg\},
\end{align}
lead to a lifted linear system which is well-conditioned and thus to provable  recovery guarantees for the method described above. In particular, we may obtain the following upper bound for the condition number of  block-circulant matrix $\widecheck{\mathbf{M}}^{(\text{fpr})}$ obtained when one sets $\widecheck{\mathbf{m}}_\ell= \widecheck{\mathbf{m}}_{\ell}^{(\text{fpr})}$. 

\begin{theorem}[Theorem 4 and Equation (33) in \cite{IVW16}]\label{Condition Theorem} The condition number of  $\widecheck{\mathbf{M}}^{(\text{fpr})}$, the matrix obtained by setting  $\widecheck{\mathbf{m}}_\ell =\widecheck{\mathbf{m}}_{\ell}^{(\text{fpr})}$ in \eqref{eqn: Mcheck_k}, 
may be bounded by
\begin{align*}
\kappa\left(\widecheck{\mathbf{M}}^{(\text{fpr})}\right) < \max\bigg\{ 144e^2, \dfrac{9e^2 (\delta - 1)^2}{4}\bigg\} \leq C\delta^2, \quad C \in \mathbb{R}^+.
\end{align*}
Furthermore, $\widecheck{\mathbf{M}}^{(\text{fpr})}$ can be inverted in $\mathcal{O} (\delta \cdot d \log d)$-time and its smallest singular value $\sigma_{\rm min}\left(\widecheck{\mathbf{M}}^{(\textrm{fpr})}\right)$ is bounded from below by $C/\delta$.
\end{theorem}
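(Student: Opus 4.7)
The plan is to exploit the block-circulant structure of $\widecheck{\mathbf{M}}^{(\text{fpr})}$ together with the special exponential/Fourier form of the mask $\widecheck{\mathbf{m}}_\ell^{(\text{fpr})}$ so that the entire matrix decouples under two FFTs, making both the condition number and the inversion cost transparent.

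First I would block-diagonalize $\widecheck{\mathbf{M}}^{(\text{fpr})}$ using the block-DFT. Since $\widecheck{\mathbf{M}}^{(\text{fpr})}$ is block-circulant with $d$ blocks of size $(2\delta-1)\times(2\delta-1)$ and only $\delta$ nonzero block-diagonals, conjugation by $\mathbf{F}_d\otimes \mathbf{I}_{2\delta-1}$ yields a block-diagonal matrix whose $j$th block is
\begin{equation*}
\widehat{\mathbf{M}}_j \;\coloneqq\; \sum_{k=0}^{\delta-1} e^{-2\pi \mathbbm{i} jk/d}\,\widecheck{\mathbf{M}}_k, \qquad j \in [d]_0.
\end{equation*}
Since this conjugation is unitary, the singular values of $\widecheck{\mathbf{M}}^{(\text{fpr})}$ are the union (over $j$) of those of $\widehat{\mathbf{M}}_j$, so $\kappa(\widecheck{\mathbf{M}}^{(\text{fpr})}) = \max_j \sigma_{\max}(\widehat{\mathbf{M}}_j)\,/\,\min_j \sigma_{\min}(\widehat{\mathbf{M}}_j)$. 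This reduces the task to a uniform spectral bound on $d$ matrices of size $(2\delta-1)\times(2\delta-1)$.

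Next I would substitute the closed form of $\widecheck{\mathbf{m}}_\ell^{(\text{fpr})}$ into \eqref{eqn: Mcheck_k}. Because every entry of $\widecheck{\mathbf{M}}_k$ is a product of two mask evaluations, the $\ell$-dependence factors out as a pure phase $e^{-2\pi \mathbbm{i} j\ell/(2\delta-1)}$ (up to a wraparound twist in the second case of \eqref{eqn: Mcheck_k}), leaving an $\ell$-independent magnitude proportional to $e^{-(j+2k+2)/a}$. Performing a second DFT of size $2\delta-1$ in the $\ell$ index then turns each $\widehat{\mathbf{M}}_j$ into a diagonal matrix (up to a cyclic row permutation induced by the wraparound) whose entries are the explicit geometric sums $\sum_{k=0}^{\delta-1} e^{-2\pi \mathbbm{i} jk/d}\, e^{-(2k+\,\cdot\,)/a}$, and every singular value of $\widehat{\mathbf{M}}_j$ becomes the modulus of one such sum.

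The remaining quantitative step is to bound these moduli. An upper bound follows immediately from the triangle inequality and the geometric series with ratio $e^{-2/a}$, giving $\sigma_{\max}(\widehat{\mathbf{M}}_j) \lesssim 1$ uniformly in $j$. The lower bound will be the main obstacle: one must prevent destructive interference between the phases $e^{-2\pi \mathbbm{i} jk/d}$ and the decaying weights. My approach would be to isolate the leading $k=0$ term, control the tail by $\sum_{k\ge 1} e^{-2k/a}$, and use the hypothesis $a \geq (\delta-1)/2$ to ensure the tail is dominated by the leading term; a case split on whether $a=4$ or $a=(\delta-1)/2$ produces the two explicit constants $144e^2$ and $9e^2(\delta-1)^2/4$. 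This yields $\sigma_{\min} \gtrsim 1/\delta$ and hence $\kappa \le C\delta^2$. Finally, the $\mathcal{O}(\delta\,d\log d)$ inversion cost is automatic: the outer block-DFT and inner DFT cost $\mathcal{O}(d\delta \log d)$ and $\mathcal{O}(d\delta \log \delta)$ respectively, and inverting the resulting diagonal is $\mathcal{O}(d\delta)$ pointwise divisions.
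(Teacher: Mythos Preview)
The paper does not prove this theorem at all; it is stated as a citation of Theorem~4 and Equation~(33) of \cite{IVW16}, so there is no ``paper's own proof'' to compare against. Your overall strategy --- block-diagonalize the block-circulant matrix via $\mathbf{F}_d\otimes\mathbf{I}_{2\delta-1}$, then exploit the explicit exponential/Fourier form of $\widecheck{\mathbf{m}}_\ell^{(\text{fpr})}$ so that a second length-$(2\delta-1)$ DFT reduces each $\widehat{\mathbf{M}}_j$ to (a permutation of) a diagonal matrix whose entries are closed-form geometric sums --- is exactly the mechanism used in \cite{IVW16}, and the runtime analysis follows from it just as you say.

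There is, however, a genuine gap in your proposed lower singular-value bound. You write that you would ``isolate the leading $k=0$ term, control the tail by $\sum_{k\ge 1} e^{-2k/a}$, and use the hypothesis $a \geq (\delta-1)/2$ to ensure the tail is dominated by the leading term.'' This reverse-triangle-inequality argument fails precisely in the regime $a=(\delta-1)/2$: then $e^{-2/a}=e^{-4/(\delta-1)}$ is within $O(1/\delta)$ of $1$, so the tail
\[
\sum_{k=1}^{\delta-1} e^{-2k/a}\;=\;\frac{e^{-2/a}\bigl(1-e^{-2(\delta-1)/a}\bigr)}{1-e^{-2/a}}\;\approx\;\frac{(1-e^{-4})(\delta-1)}{4}
\]
grows linearly in $\delta$ and swamps the leading term. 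The inequality $a\ge(\delta-1)/2$ makes the decay \emph{slower}, not faster, so it cannot be used in the direction you suggest. The argument in \cite{IVW16} instead sums the geometric series in closed form, $\sum_{k=0}^{\delta-1} r^k=(1-r^{\delta})/(1-r)$ with $|r|=e^{-2/a}$, and then bounds numerator and denominator separately: the numerator satisfies $|1-r^{\delta}|\ge 1-e^{-2\delta/a}$, which is bounded below by a positive absolute constant because $2\delta/a\ge 4$ by the choice of $a$, while $|1-r|$ is handled directly. The two branches of $a=\max\{4,(\delta-1)/2\}$ then produce the explicit constants $144e^2$ and $9e^2(\delta-1)^2/4$. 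Once you replace the crude tail bound by this closed-form geometric-series estimate, the rest of your outline goes through.
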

\subsection{Angular Synchronization}\label{sec: Eigenvector-Based Angular Synchronization}
Inverting $\widecheck{\mathbf{M}}$ as described in the previous subsection allows one to obtain a portion of the autocorrelation matrix $\mathbf{x}\mathbf{x}^*$. This motivates us to consider
angular synchronization, the process of recovering a vector $\mathbf{x}$ from  (a portion of) its autocorrelation matrix $\mathbf{x}\mathbf{x}^*$ (or an estimate  $\widehat{\mathbf{X}}$). One  popular approach, which we discuss below, is based on upon first entry-wise normalizing this matrix and then taking the lead eigenvector. 
Specifically, 
we define a truncated autocorrelation matrix $\mathbf{X}$ corresponding to the true signal $\mathbf{x}$ by
\begin{align} \label{eqn: bigX}
X_{j,k} = 
\begin{cases}
x_{j}\overline{x_k},&\lvert j - k\rvert  \; \text{mod} \; d < \delta\cr
0, \qquad &\text{otherwise.}\cr
\end{cases}
\end{align}
We also define a truncated autocorrelation matrix $\widehat{\mathbf{X}}$ corresponding to our estimate, $\mathbf{x}_{\text{est}}$, given by 
\begin{align}\label{eqn: bigXhat}
 \widehat{X}_{j,k} = 
\begin{cases}
(x_{\text{est}})_{j}\overline{(x_{\text{est}})_k}, &\lvert j - k\rvert  \; \text{mod} \; d < \delta\cr
0, \qquad &\text{otherwise.}\cr
\end{cases}
\end{align}
The method  from \cite{IVW16} is based upon first solving for $\widehat{\mathbf{X}}$ and then solving for $\mathbf{x}_{\text{est}}$. If $\widehat{\mathbf{X}}$ is a good approximation of $\mathbf{X}$, then the results proved in \cite{viswanathan2015fast} show that $\mathbf{x}_{\text{est}}$ will be a good approximation of $\mathbf{x}$.

Moving forward, prior works \cite{IVW16,viswanathan2015fast} effectively decomposed $\mathbf{X} = \mathbf{X}^{(\theta)} \circ \mathbf{X}^{(\text{mag})}$ into its phase and magnitude matrices by setting $X^{(\text{mag})}_{j,k}=\lvert X_{j,k}\rvert$ and $X^{(\theta)}_{j,k}= X_{j,k}/\lvert X_{j,k}\rvert$ if $\lvert X_{j,k}\rvert\neq 0$ with $X^{(\theta)}_{j,k}=0$ otherwise.  One may then write $\widehat{\mathbf{X}} = \widehat{\mathbf{X}}^{(\theta)} \circ \widehat{\mathbf{X}}^{(\text{mag})}$. Note that by construction, if $\mathbf{x}$ is nonvanishing, then we have  $\lvert X^{(\theta)}_{j,k}\rvert = 1$ and $X^{(\text{mag})}_{j,k} > 0$ whenever $\lvert j - k\rvert  \; \text{mod} \; d < \delta$. 
 Letting $\mathbf{u}\in\mathbb{C}^d$ be the leading eigenvector of $\widehat{\mathbf{X}}$ and letting $\text{diag}(\widehat{\mathbf{X}})\in\mathbb{C}^d$ be the main diagonal of $\widehat{\mathbf{X}}$, the output of the resulting algorithm is then $\mathbf{x}_{est} \coloneqq \text{diag}(\widehat{\mathbf{X}}) \circ \mathbf{u}$. 
 \begin{example} \label{eqn: angular sync example}  Let $d = 4, \delta = 2$. Then $\widehat{\mathbf{X}}$ defined as in \eqref{eqn: bigXhat} is given by
\begin{align*} \widehat{\mathbf{X}} 
 = 
\begin{pmatrix}
\lvert{(x_{\text{est}})}_0\rvert^2 & (x_{\text{est}})_0 \overline{(x_{\text{est}})_1} & 0 & (x_{\text{est}})_0 \overline{(x_{\text{est}})_3}&\\
(x_{\text{est}})_1 \overline{(x_{\text{est}})_1} & \lvert (x_{\text{est}})_1\rvert^2 & (x_{\text{est}})_1 \overline{(x_{\text{est}})_2} & 0 &\\
0 & (x_{\text{est}})_2 \overline{(x_{\text{est}})_1} & \lvert(x_{\text{est}})_2\rvert^2  &  (x_{\text{est}})_2 \overline{(x_{\text{est}})_3}&\\
(x_{\text{est}})_3 \overline{(x_{\text{est}})_0}& 0 & (x_{\text{est}})_3 \overline{(x_{\text{est}})_2} & \lvert(x_{\text{est}})_3\rvert^2&\\
\end{pmatrix}.
\end{align*}
If we write $(x_{\text{est}})_n=\lvert(x_{\text{est}})_n\rvert e^{\mathbbm{i}\theta_n}$, then we may compute
\begin{align*} \widehat{\mathbf{X}}^{(\theta)} 
=
\begin{pmatrix}
1 & e^{i(\theta_0 - \theta_1)} & 0 &  e^{i(\theta_0 - \theta_3)}&\\
e^{i(\theta_1 - \theta_0)}& 1 & e^{i(\theta_1 - \theta_2)} & 0 &\\
0 & e^{i(\theta_2 - \theta_1)}& 1  &  e^{i(\theta_2 - \theta_3)}&\\
e^{i(\theta_3 - \theta_0)} & 0 & e^{i(\theta_3 - \theta_2)} & 1&\\
\end{pmatrix}.
\end{align*}
One may verify that the lead eigenvector is $\mathbf{u} = (e^{\mathbbm{i}\theta_0} \; e^{i \theta_1} \; e^{i \theta_2} \; e^{\mathbbm{i}\theta_3})^T$ and therefore 
\begin{align*} 
\mathbf{x}_{\rm{est}} = \sqrt{\rm{diag}(\widehat{\mathbf{X}})} \circ \mathbf{u} = (\lvert(x_{\text{est}})_0\rvert e^{\mathbbm{i}\theta_0} \; \lvert(x_{\text{est}})_1\rvert e^{\mathbbm{i}\theta_1} \; \lvert(x_{\text{est}})_2\rvert e^{\mathbbm{i}\theta_2} \; \lvert(x_{\text{est}})_3\rvert e^{\mathbbm{i}\theta_3})^T.
\end{align*}
\end{example}
In Section \ref{sec:ImprovedBlockPRanalysis}, we will discuss another slightly more sophisticated way for estimating the phases based on Algorithm 3 of \cite{preskitt2018phase} which involves taking the smallest eigenvector of an appropriately weighted graph Laplacian.  Indeed, this new angular synchronization approach is what ultimately allows for the NFP error bound in Theorem~\ref{thm:MainThm} to have improved dependence on signal dimension $d$ over prior FFP error bounds in \cite{IVW16,IPSV18,preskitt2021admissible}.  The end result will be a more accurate method for computing $\widehat{\mathbf{X}}$ in \eqref{eqn: bigXhat} from given NFP measurements \eqref{eqn: noisy near field}.

\section{Near from Far:  Guaranteed Near-Field Ptychographic Recovery via Far-Field Results}
\label{sec:NearfromFar}
In this section, we show how to relate the near-field ptychographic measurements \eqref{eqn: noisy near field} to the far-field ptychographic measurements \eqref{equ:FFPmeasurements}. This will allow us to recover $\mathbf{x}$ by using methods similar to those introduced in \cite{IVW16}. In order get nontrivial bounds, we  will also need to prove the existence of an admissible PSF and mask pair, ${\bf p} \in \mathbb{C}^d$ and  ${\bf m} \in \mathbb{C}^d$, which lead to a well conditioned linear system in \eqref{eqn: Y equation}.  In particular,  we will present a PSF and mask pair  
such that the resulting block-circulant matrix, denoted $\widecheck{\mathbf{M}}^{(p,m)}$, will have the same condition number as the matrix
$\widecheck{\mathbf{M}}^{(\text{fpr})}$ constructed from the masks $\widecheck{\mathbf{m}}_\ell^{(\text{fpr})}$ defined in \eqref{equ:Goodmasks}. Therefore, Theorem~\ref{Condition Theorem} will allow us to obtain convergence guarantees for Algorithm \ref{NFP-BlockPR}.  

Here, we will set the measurement index set $\mathcal{S}$ considered in \eqref{eqn: noisy near field} to be $\mathcal{S} = \mathcal{K} \times \mathcal{L}$ where $\mathcal{K}=[d]_0$ and $\mathcal{L}=[2\delta-1]_0$.
The following lemma proves that we can rewrite NFP measurements from \eqref{eqn: noisy near field} as local FFP measurements of the form \eqref{equ:FFPmeasurements} as long as the mask ${\bf m}$ has local support and the PSF is periodic.  
It will be based upon defining masks
\begin{align} \label{equ:NFPmaskdef}
\widecheck{\mathbf{m}}_{\ell}^{(p,m)}\coloneqq \overline{S_{\ell}\widetilde{\mathbf{p}} \circ \mathbf{m}} \in \mathbb{C}^d, 
\end{align}
where  $\widetilde{\mathbf{p}}$ is the reversal of $\mathbf{p}$ about its first entry modulo $d$, i.e., $\widetilde{p}_n=p_{-n \text{ mod } d}$ 
Since the masks $\widecheck{\mathbf{m}}_{\ell}^{(p,m)}$ have compact support, this will then yield a lifted set of linear measurements of the type considered in \cite{IVW16,IPSV18,preskitt2021admissible}. 

\begin{lemma}\label{lem: rewrite}
Let $\mathcal{S}=\mathcal{K}\times\mathcal{L}= [d]_0\times[2\delta-1]_0,$ and recall the measurements 
\begin{align*} 
Y_{k,\ell} = \lvert (\mathbf{p} * (S_k \mathbf{m} \circ \mathbf{x}))_\ell \rvert^2, \; (k,\ell) \in \mathcal{S},
\end{align*}
defined in \eqref{eqn: noisy near field}.
Suppose that $2 \delta - 1$ divides $d$, that ${\bf p} \in \mathbb{C}^d$ is $2 \delta - 1$ periodic, and that  ${\bf m} \in \mathbb{C}^d$ satisfies ${\rm supp}({\bf m}) \subseteq [\delta]_0$.
Then,  we may rearrange   the measurements \eqref{eqn: noisy near field} into a matrix of  FFP-type measurements 
\begin{align} \label{equ:newFFPmeasurements}
\widetilde{Y}_{k,\ell} \coloneqq Y_{-k~{\rm mod }~d,~k-\ell ~{\rm mod}~2\delta-1} = \lvert \langle \widecheck{\mathbf{m}}_{\ell}^{(p,m)} , S_{k}\mathbf{x} \rangle \rvert ^2, ~ (k,\ell) \in [d]_0 \times [2\delta-1]_0,
\end{align}
where $\widecheck{\mathbf{m}}_{\ell}^{(p,m)}$ is defined as in \eqref{equ:NFPmaskdef}.
As a consequence, recovering $\mathbf{x}$ is equivalent to inverting a block-circulant matrix as described in \eqref{eqn: block-circulant matrix M} -- \eqref{eqn: Y equation}.
\end{lemma}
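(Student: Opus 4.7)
The plan is to verify the claimed identity by a direct index manipulation, using the local support of $\mathbf{m}$ and the periodicity of $\mathbf{p}$ in essential ways, and then to observe that the reformulated measurements sit inside the framework of Section~\ref{sec:Preliminaries}. I will not attempt to say anything new about the inversion itself, since Lemma~\ref{lem: rewrite} only claims equivalence with an FFP-type problem.

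Concretely, I would start by expanding the convolution in the definition of $Y_{-k \bmod d,\, k - \ell \bmod 2\delta - 1}$ directly, writing it as $\big|\sum_{n=0}^{d-1} p_{(k-\ell) - n} \, m_{n-k} \, x_n\big|^2$, where all indices are read mod $d$. A substitution $n \mapsto n+k$ then slides the circular shift onto $\mathbf{x}$ and produces a sum whose summand involves $p_{-(n+\ell)}$, $m_n$, and $(S_k \mathbf{x})_n$. The key algebraic observation is that $p_{-(n+\ell) \bmod d} = \widetilde{p}_{n+\ell \bmod d} = (S_\ell \widetilde{\mathbf{p}})_n$, so the summand is $(S_\ell \widetilde{\mathbf{p}})_n \, m_n \, (S_k\mathbf{x})_n = \overline{(\widecheck{\mathbf{m}}_\ell^{(p,m)})_n}\, (S_k\mathbf{x})_n$ by the definition \eqref{equ:NFPmaskdef}. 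Taking modulus-squared absorbs the overall conjugation and yields precisely $|\langle \widecheck{\mathbf{m}}_\ell^{(p,m)}, S_k \mathbf{x}\rangle|^2$, which is the desired right-hand side of \eqref{equ:newFFPmeasurements}.

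Before concluding I need to justify that the second index $k - \ell$ may be taken modulo $2\delta - 1$ rather than modulo $d$. This is exactly where the hypothesis that $\mathbf{p}$ is $(2\delta - 1)$-periodic (together with $(2\delta - 1) \mid d$) enters: in the original measurement formula the parameter $\ell$ enters only through $p_{\ell - n}$, so $\ell \mapsto Y_{k, \ell}$ is itself $(2\delta-1)$-periodic and the reindexing is well-defined and produces the same value as reducing mod $d$. Since $\mathrm{supp}(\mathbf{m}) \subseteq [\delta]_0$, the mask $\widecheck{\mathbf{m}}_\ell^{(p,m)} = \overline{S_\ell \widetilde{\mathbf{p}} \circ \mathbf{m}}$ inherits the local support $[\delta]_0$, so the rearranged measurements $\widetilde{Y}_{k,\ell}$ for $(k,\ell) \in [d]_0 \times [2\delta-1]_0$ are precisely local FFP measurements of the form \eqref{equ:FFPmeasurements}. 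The ``as a consequence'' clause then follows immediately by invoking the block-circulant setup \eqref{eqn: block-circulant matrix M}--\eqref{eqn: Y equation} from Section~\ref{sec:Preliminaries} applied to $\widecheck{\mathbf{m}}_\ell^{(p,m)}$, producing the corresponding matrix $\widecheck{\mathbf{M}}^{(p,m)}$ whose inversion recovers the truncated autocorrelation of $\mathbf{x}$.

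The ``hard part'' here is not conceptual but clerical: I must simultaneously juggle two moduli ($d$ and $2\delta - 1$), the reversal operator $\widetilde{\mathbf{p}}$, the shift $S_k$ versus $S_{-k}$, and the inner-product convention $\langle \mathbf{x},\mathbf{y}\rangle = \sum x_n \overline{y_n}$. A single sign or conjugation error would make the statement fail, so I would be especially careful at the step identifying the summand with $\overline{(\widecheck{\mathbf{m}}_\ell^{(p,m)})_n}$ and at the step invoking periodicity to change the modulus. What is genuinely nontrivial, namely producing a choice of $(\mathbf{p}, \mathbf{m})$ for which the resulting $\widecheck{\mathbf{M}}^{(p,m)}$ is well-conditioned, is not part of this lemma and is deferred to the subsequent construction that allows Theorem~\ref{Condition Theorem} to be imported wholesale.
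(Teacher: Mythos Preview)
Your proposal is correct and follows essentially the same route as the paper's proof: both verify \eqref{equ:newFFPmeasurements} by unwinding the convolution, shifting the index onto $\mathbf{x}$, and invoking the $(2\delta-1)$-periodicity of $\mathbf{p}$ to reconcile the two moduli. The only cosmetic difference is that the paper packages the index manipulations into a chain of abstract operator identities (the shift, Hadamard, and conjugation lemmas of Appendix~\ref{secA}) rather than writing out the sum explicitly as you do, but the underlying computation is identical.
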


\begin{proof} By Lemma \ref{Shift Identities} part \ref{Shift Lemma 1} , Lemma \ref{Lemma 2}, Lemma \ref{Shift Identities} part \ref{Shift Lemma 2}, and Lemma \ref{Conjugation Identity} from Appendix \ref{secA}, we have that
\begin{align*}\nonumber
Y_{k,\ell} = \lvert (\mathbf{p} * (S_k \mathbf{m} \circ \mathbf{x}))_\ell\rvert^2 &= \lvert\langle S_{-\ell} \widetilde{\mathbf{p}}, \overline{S_k \mathbf{m} \circ \mathbf{x}} \rangle\rvert^2&\\
&= \lvert\langle S_{-\ell} \widetilde{\mathbf{p}} \circ S_k \mathbf{m}, \overline{\mathbf{x}} \rangle\rvert^2&\\\nonumber
&= \lvert\langle S_k (S_{-\ell - k}\widetilde{\mathbf{p}} \circ \mathbf{m}), \overline{\mathbf{x}} \rangle\rvert^2&\\\nonumber
&= \lvert\langle S_k (\overline{S_{-\ell - k}\widetilde{\mathbf{p}} \circ \mathbf{m}}), \mathbf{x} \rangle \rvert^2\\\nonumber
&= \lvert\langle S_k (\overline{S_{-\ell - k ~{\rm mod }~2\delta - 1}\widetilde{\mathbf{p}} \circ \mathbf{m}}), \mathbf{x} \rangle \rvert^2,
\end{align*}
where the last equality uses the fact that ${\bf p}$ is $2 \delta - 1$ periodic.
 We may now apply Lemma \ref{Shift Identities} part \ref{Shift Lemma 3} to see that\begin{align*}\nonumber
Y_{k,\ell} = \lvert\langle S_k (\overline{S_{-\ell - k ~{\rm mod }~2\delta - 1}\widetilde{\mathbf{p}} \circ \mathbf{m}}), \mathbf{x} \rangle \rvert^2 = \lvert\langle  (\overline{S_{-\ell - k ~{\rm mod }~2\delta - 1}\widetilde{\mathbf{p}} \circ \mathbf{m}}), S_{-k}\mathbf{x} \rangle \rvert^2 .
\end{align*}
Finally, since $\widecheck{\mathbf{m}}_{\ell}^{(p,m)} = \overline{S_{\ell}\widetilde{\mathbf{p}} \circ \mathbf{m}}$, we see that for all $k\in[d]_0$ and all $\ell\in[2\delta-1]_0$, we have 
\begin{align*}
    \widetilde{Y}_{k,\ell} &= Y_{-k~{\rm mod }~d,~k-\ell ~{\rm mod}~2\delta-1}\\
    &=\lvert\langle  (\overline{S_{-(k-\ell) - (-k) ~{\rm mod }~2\delta - 1}\widetilde{\mathbf{p}} \circ \mathbf{m}}), S_{-(-k)}\mathbf{x} \rangle \rvert^2\\
    &=\lvert\langle  (\overline{S_{\ell ~{\rm mod }~2\delta - 1}\widetilde{\mathbf{p}} \circ \mathbf{m}}), S_{k}\mathbf{x} \rangle \rvert^2\\
    &=\lvert \langle \widecheck{\mathbf{m}}_{\ell}^{(p,m)} , S_{k}\mathbf{x} \rangle \rvert ^2.
\end{align*} 
\vspace{-5mm}{}\end{proof}

\begin{remark} \label{rmk: full rewritten} If we instead change the pairs $\mathcal{S}$ in Lemma~\ref{lem: rewrite} for which we collect NFP measurements \eqref{eqn: noisy near field} to be $\mathcal{S}':= \cup_{k \in [d]_0} \{d-k\} \times \{k-2\delta+2, \dots, k-1,k\} \mod d$, then we may remove the assumption that $\mathbf{p}$ is $2\delta-1$ periodic. In particular, for $(k,\ell) \in \mathcal{S}'$ one may substitute $k=d-k'$ and $\ell'= k'-i$ for some $0\leq i\leq 2\delta-2$ to see that then $(-k'-\ell') \text{ mod } d = i$. Thus, since $0\leq i \leq 2\delta-2$, $(-\ell'-k')\text{ mod } 2\delta-1=(-\ell'-k')\text{ mod } d$,
and so we may use the same calculation as above 
without assuming that $\mathbf{p}$ is $2 \delta - 1$ periodic.  Note, however, that $\mathcal{S}$ has a simple Cartesian product structure whereas $\mathcal{S}'$ does not.  As a result, the entries of $\mathbf{p} * (S_k \mathbf{m} \circ \mathbf{x})$ that one must sample varies based on the mask shift $k$ in the case of $\mathcal{S}'$, potentially complicating the collection of the associated NFP measurements \eqref{eqn: noisy near field} in some situations.
\end{remark}

Next, in Lemma \ref{lem: Choice of p and m} below, we will show how to choose a mask $\mathbf{m}$ and PSF $\mathbf{p}$
such that  $\widecheck{\mathbf{m}}_{\ell}^{(p,m)}$ defined as in \eqref{equ:NFPmaskdef} and  $\widecheck{\mathbf{m}}_{{2 \ell  \; \text{mod} \; 2\delta-1} }^{(\text{fpr})}$ defined as in \eqref{equ:Goodmasks} will only differ by a global phase for each $\ell \in [2 \delta - 1]_0$.   As a consequence, we obtain the desired result that the block-circulant matrix arising from the NFP measurements \eqref{eqn: noisy near field} is essentially equivalent (up to a row permutation and global phase shift) to the well-conditioned lifted linear measurement operator $\widecheck{\mathbf{M}}^{(\text{fpr})}$ considered in Theorem \ref{Condition Theorem}.

\begin{lemma}\label{lem: Choice of p and m} Let $\mathbf{p},\mathbf{m}\in\mathbb{C}^d$ have entries given by $$p_{n} \coloneqq e^{-\tfrac{2 \pi \mathbbm{i} n^2}{2\delta-1}}, \quad\text{and}\quad m_n \coloneqq 
\begin{cases}
\dfrac{e^{-n+1}/a}{\sqrt[\leftroot{-2}\uproot{2}4]{2\delta - 1}}\cdot e^{\tfrac{2\pi \mathbbm{i} n^2}{2\delta-1}}, &n \in [\delta]_0\cr
0,  &\text{otherwise}\cr
\end{cases},
$$ where $a\coloneqq \max\bigg\{ 4, \dfrac{\delta - 1}{2}\bigg\}$. Then for all $\ell\in [2\delta - 1]_0$, $\widecheck{\mathbf{m}}_{\ell}^{(p,m)} = \overline{S_{\ell}\widetilde{\mathbf{p}} \circ \mathbf{m}}$ satisfies 
\begin{align}\label{eqn: mask modulation}
\widecheck{\mathbf{m}}_{\ell}^{(p,m)} = e^{\tfrac{2\pi \mathbbm{i} \ell^2}{2\delta-1}} \cdot \widecheck{\mathbf{m}}_{2\ell  \; \text{mod} \; 2\delta-1}^{(\text{fpr})},
\end{align}
where $\widecheck{\mathbf{m}}_{\ell}^{(\text{fpr})}$ is defined as in \eqref{equ:Goodmasks}. 
As a consequence, if we let $\widecheck{\mathbf{M}}^{(\text{fpr})}$ and $\widecheck{\mathbf{M}}^{(p,m)}$ be the lifted linear measurement matrices as per \eqref{eqn: block-circulant matrix M} obtained by setting each $\widecheck{\mathbf{m}}_\ell$  in \eqref{eqn: Mcheck_k} equal to $\widecheck{\mathbf{m}}^{(\text{fpr})}_\ell$ and $\widecheck{\mathbf{m}}_{\ell}^{(p,m)}$, respectively, then we will have 
\begin{equation}\label{eqn: circulant same}
    \widecheck{\mathbf{M}}^{(p,m)}={\mathbf{P}}\widecheck{\mathbf{M}}^{(\text{fpr})},
\end{equation}
where $\mathbf{P}$ is a $D \times D$ block diagonal permutation matrix. Thus $\widecheck{\mathbf{M}}^{({p,m})}$ and $\widecheck{\mathbf{M}}^{({fpr})}$ have the same singular values and $$\kappa\left(\widecheck{\mathbf{M}}^{(p,m)}\right) =\kappa\left(\widecheck{\mathbf{M}}^{(\text{fpr})}\right)\leq C\delta^2,$$
where $\kappa(\cdot)$ denotes the condition number of a matrix.
\end{lemma}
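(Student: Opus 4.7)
The plan is to proceed in two stages: first verify the pointwise identity \eqref{eqn: mask modulation} by a direct computation, and then transfer that identity to the block-circulant matrix level to establish \eqref{eqn: circulant same} and the claimed condition number bound.

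For the first stage, I would start by observing that since $p_n = e^{-2\pi\mathbbm{i} n^2/(2\delta-1)}$ depends on $n$ only through $n^2 \bmod (2\delta-1)$, and since $(-n \bmod d)^2 \equiv n^2 \pmod{2\delta-1}$ whenever $2\delta-1$ divides $d$, we have $\widetilde{\mathbf{p}} = \mathbf{p}$. Next I would compute $(S_\ell\widetilde{\mathbf{p}})_n = e^{-2\pi\mathbbm{i}(n+\ell)^2/(2\delta-1)}$ and expand $(n+\ell)^2 = n^2 + 2n\ell + \ell^2$, so that when I take the Hadamard product with $\mathbf{m}$ the $e^{\pm 2\pi\mathbbm{i} n^2/(2\delta-1)}$ factors cancel on the support of $\mathbf{m}$. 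Conjugating yields
\begin{align*}
(\widecheck{\mathbf{m}}_\ell^{(p,m)})_n = \frac{e^{-(n+1)/a}}{\sqrt[4]{2\delta-1}}\, e^{2\pi\mathbbm{i}\ell^2/(2\delta-1)}\, e^{2\pi\mathbbm{i} n(2\ell)/(2\delta-1)}
\end{align*}
for $n\in[\delta]_0$ (and $0$ elsewhere). Comparing with the definition \eqref{equ:Goodmasks} after replacing $\ell$ by $2\ell \bmod (2\delta-1)$ gives exactly \eqref{eqn: mask modulation}. This is the only real computation, and it is routine once the periodicity and symmetry of $\mathbf{p}$ are handled correctly.

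For the second stage I would use the fact that the entries of $\widecheck{\mathbf{M}}_k$ in \eqref{eqn: Mcheck_k} are bilinear of the form $(\widecheck{\mathbf{m}}_\ell)_k\,\overline{(\widecheck{\mathbf{m}}_\ell)_{j+k}}$, so any unimodular scalar $c_\ell$ attached to $\widecheck{\mathbf{m}}_\ell$ cancels against its conjugate. Applying this observation with $c_\ell = e^{2\pi\mathbbm{i}\ell^2/(2\delta-1)}$ from \eqref{eqn: mask modulation}, I conclude that
\begin{align*}
(\widecheck{\mathbf{M}}_k^{(p,m)})_{\ell,j} = (\widecheck{\mathbf{M}}_k^{(\text{fpr})})_{2\ell\bmod(2\delta-1),\,j}.
\end{align*}
Since $\gcd(2,2\delta-1)=1$, the map $\ell\mapsto 2\ell\bmod(2\delta-1)$ is a permutation $\pi$ of $[2\delta-1]_0$ which is the same for every block index $k$. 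Letting $\Pi$ denote the associated $(2\delta-1)\times(2\delta-1)$ permutation matrix, I get $\widecheck{\mathbf{M}}_k^{(p,m)} = \Pi\,\widecheck{\mathbf{M}}_k^{(\text{fpr})}$ for all $k$, and therefore $\widecheck{\mathbf{M}}^{(p,m)} = \mathbf{P}\,\widecheck{\mathbf{M}}^{(\text{fpr})}$ where $\mathbf{P}$ is the block diagonal matrix with $d$ copies of $\Pi$ on the diagonal.

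Finally, since $\mathbf{P}$ is a permutation matrix and hence unitary, left multiplication preserves singular values, so $\widecheck{\mathbf{M}}^{(p,m)}$ and $\widecheck{\mathbf{M}}^{(\text{fpr})}$ have identical singular values and condition numbers, and the bound $\kappa(\widecheck{\mathbf{M}}^{(p,m)}) \le C\delta^2$ follows from Theorem~\ref{Condition Theorem}. The main subtlety throughout is bookkeeping: keeping straight the reduction modulo $d$ versus modulo $2\delta-1$ (which is legitimate because $\mathbf{p}$ is $(2\delta-1)$-periodic and $2\delta-1 \mid d$), and verifying that the permutation $\pi$ truly does not depend on $k$ or $j$. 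These are the only genuine points to watch; no deeper tool than Theorem~\ref{Condition Theorem} is needed.
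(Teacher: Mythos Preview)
Your proposal is correct and follows essentially the same route as the paper: a direct expansion of $(n+\ell)^2$ to obtain \eqref{eqn: mask modulation}, followed by the observation that the unimodular factor $e^{2\pi\mathbbm{i}\ell^2/(2\delta-1)}$ cancels in the bilinear entries of $\widecheck{\mathbf{M}}_k$, leaving only the row permutation $\ell\mapsto 2\ell\bmod(2\delta-1)$. The only cosmetic difference is that you explicitly note $\widetilde{\mathbf{p}}=\mathbf{p}$ up front, whereas the paper absorbs this into the computation $\overline{p}_{-n-\ell}=e^{2\pi\mathbbm{i}(n+\ell)^2/(2\delta-1)}$ directly.
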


\begin{proof} Using the definition of the Hadamard product $\circ$, the circulant shift operator $S_\ell$ and the reversal operator $\mathbf{x}\mapsto\widetilde{\mathbf{x}}$, we see that
\begin{align*}
(\widecheck{\mathbf{m}}_{\ell}^{(p,m)})_n = \overline{(S_\ell \widetilde{\mathbf{p}} \circ \mathbf{m})_{n}} = \overline{(S_\ell \widetilde{\mathbf{p}})_n \mathbf{m}_n} = \widetilde{\overline{\mathbf{p}}}_{n+\ell} \overline{\mathbf{m}_n}= \overline{\mathbf{p}}_{-n-\ell} \overline{\mathbf{m}_n}.
\end{align*}
Therefore, inserting the definitions of $\mathbf{p} $ and $\mathbf{m}$ above shows that for $n\in[\delta]_0$  
\begin{align*}\nonumber
(\widecheck{\mathbf{m}}_{\ell}^{(p,m)})_n &= e^{\tfrac{2 \pi \mathbbm{i} (n+\ell)^2}{2\delta-1}} \cdot \frac{e^{-(n+1)/a}}{\sqrt[\leftroot{-2}\uproot{2}4]{2\delta - 1}}\cdot e^{-\tfrac{2\pi \mathbbm{i} n^2}{2\delta-1}}&\\
&= e^{\tfrac{2 \pi \mathbbm{i} n^2}{2\delta-1}} \cdot e^{\tfrac{4 \pi \mathbbm{i} n\ell}{2\delta-1}} \cdot e^{\tfrac{2 \pi \mathbbm{i} \ell^2}{2\delta-1}} \cdot \frac{e^{-(n+1)/a}}{\sqrt[\leftroot{-2}\uproot{2}4]{2\delta - 1}}\cdot e^{-\tfrac{2 \pi \mathbbm{i} n^2}{2\delta-1}}&\\\nonumber
&= e^{\tfrac{2 \pi \mathbbm{i} \ell^2}{2\delta-1}}\bigg(\frac{e^{-(n+1)/a}}{\sqrt[\leftroot{-2}\uproot{2}4]{2\delta - 1}} \cdot e^{\frac{2\pi \mathbbm{i} n (2\ell)}{2\delta - 1}}\bigg) ~=~ e^{\tfrac{2 \pi \mathbbm{i} \ell^2}{2\delta-1}} \left( \widecheck{\mathbf{m}}_{2 \ell  \; \text{mod} \; 2\delta-1}^{(\text{fpr})} \right)_n.&\nonumber
\end{align*}
For $n\notin[\delta]_0$, we have  $\left(\widecheck{\mathbf{m}}_{\ell}^{(p,m)} \right)_n=e^{\tfrac{2 \pi \mathbbm{i} \ell^2}{2\delta-1}} \left( \widecheck{\mathbf{m}}_{2 \ell  \; \text{mod} \; 2\delta-1}^{(\text{fpr})} \right)_n=0$.  
Thus  \eqref{eqn: mask modulation} follows.

To prove \eqref{eqn: circulant same}, let $\widecheck{\mathbf{M}}^{(p,m)}$ and $\widecheck{\mathbf{M}}^{(p,m)}_k$ be the matrices obtained by  using the mask $\widecheck{\mathbf{m}}_\ell^{p,m}$ in \eqref{eqn: block-circulant matrix M} and \eqref{eqn: Mcheck_k} and let $\widecheck{\mathbf{M}}^{(\text{fpr})}$ and $\widecheck{\mathbf{M}}^{(\text{fpr})}_k$ be the matrices obtained using $\mathbf{m}^{(\text{fpr})}_\ell$ instead.
Then combining \eqref{eqn: mask modulation} and \eqref{eqn: Mcheck_k} implies
that $\left(\widecheck{\mathbf{M}}^{(p,m)}_k\right)_{i,j} =
\left(\widecheck{\mathbf{M}}^{(\text{fpr})}_k \right)_{2i \; \text{mod} \; 2\delta-1, j}. $ For example, when $ j \in [\delta - k + 1]_0$ one may check
\begin{align} \label{equ:Permdef}
\left(\widecheck{\mathbf{M}}^{(p,m)}_k\right)_{i,j} ~&=~ e^{\tfrac{2 \pi {\mathbbm{i}} i^2}{2\delta-1}} \left(\widecheck{\mathbf{m}}_{2 i  \; \text{mod} \; 2\delta-1} ^{(\text{fpr})}\right)_{k} \overline{e^{\tfrac{2\pi \mathbbm{i} i^2}{2\delta-1}}\left(\widecheck{\mathbf{m}}_{2i  \; \text{mod} \; 2\delta-1}^{(\text{fpr})} \right)_{j + k}}\\ ~&=~ 
\left(\widecheck{\mathbf{M}}^{(\text{fpr})}_k \right)_{2i \; \text{mod} \; 2\delta-1, j}, \nonumber
\end{align}
and one may perform similar computations in the other cases. Since each $\widecheck{\mathbf{M}}_k^{(p,m)}$ and $\widecheck{\mathbf{M}}_k^{(\text{fpr})}$ have $2\delta-1$ rows and the mapping $i\rightarrow 2i$ is a bijection on $\mathbb{Z}/(2\delta-1)\mathbb{Z}$ we see that each  $\widecheck{\mathbf{M}}_k^{(p,m)}$ may be obtained by permuting the rows of $\widecheck{\mathbf{M}}_k^{(\text{fpr})}$ (and that the permutation does not depend on $k$). Therefore, there exists a block diagonal permutation matrix $\mathbf{P}$ such that $\widecheck{\mathbf{M}}^{(p,m)}={\mathbf{P}}\widecheck{\mathbf{M}}^{(\text{fpr})}$. Finally, the  condition number bound for $\widecheck{\mathbf{M}}^{(p,m)}$ now follows from Theorem \ref{Condition Theorem} and the fact that permuting the rows of a matrix does not change its condition number or any of its singular values.
\end{proof}

Lemma~\ref{lem: rewrite} above demonstrates how to recast NFP problems involving locally supported masks and periodic PSFs as particular types of FFP problems.  Then, Lemma~\ref{lem: Choice of p and m} provides a particular PSF and mask combination for which the resulting FFP problem can be solved by inverting a well-conditioned linear system. 
Together they imply that, for properly chosen $\mathbf{m}$ and $\mathbf{p}$, one may robustly invert the measurements given in \eqref{eqn: noisy near field} by first recasting the NFP data as modified FFP data and then using the BlockPR approach from \cite{IVW16,IPSV18,preskitt2021admissible}.
This is the main idea behind Algorithm~\ref{NFP-BlockPR}. However, this approach will lead to theoretical error bounds which scale \emph{quadratically} in $d$. To remedy this, the final step of Algorithm~\ref{NFP-BlockPR} uses an alternative angular synchronization method (which originally appeared in \cite{preskitt2018phase}) based on a weighted graph Laplacian as opposed to previous works which used methods based on, e.g., the methods outlined in Section~\ref{sec: Eigenvector-Based Angular Synchronization}.  As we shall see in the next section, this will allow us to obtain bounds in Theorem~\ref{thm:MainThm}
which depend linearly on $d$ rather than quadratically.

\begin{algorithm}[H]
\caption{NFP-BlockPR} \label{NFP-BlockPR}
\begin{algorithmic}
\Require \\ 1) Variables $d, \delta, D = d(2\delta-1)$.\\
2) A $2\delta - 1$ periodic PSF $\mathbf{p} \in \mathbb{C}^d$, and a mask $\mathbf{m} \in \mathbb{C}^d$ with ${\rm supp}(\mathbf{m}) \subseteq [\delta]_0$.\\
3) A near-field ptychographic measurement matrix $\mathbf{Y} \in \mathbb{C}^{d \times 2\delta-1}$.
\Ensure $\mathbf{x}_{\text{est}}$ with $\mathbf{x}_{\text{est}} \approx e^{i \theta}\mathbf{x}$ for some $\theta \in [0,2\pi]$.
\State 1) Form masks $\widecheck{\mathbf{m}}_{\ell}^{(p,m)} = \overline{S_{\ell}\widetilde{\mathbf{p}} \circ \mathbf{m}}$ and matrix $\widecheck{\mathbf{M}}^{(p,m)}$ as per \eqref{eqn: block-circulant matrix M} and \eqref{eqn: Mcheck_k}.
\State 2)  Compute $\mathbf{z} = \left( \widecheck{\mathbf{M}}^{(p,m)} \right)^{-1} \text{vec}(\mathbf{Y}) \in \mathbb{C}^{D}$. 
\State 3) Reshape $\mathbf{z}$ to get $\widehat{\mathbf{X}}$ as per Section~\ref{sec: Eigenvector-Based Angular Synchronization} containing estimated entries of $\mathbf{x}\mathbf{x}^*$.
\State 4) Use weighted angular synchronization (Algorithm 3, \cite{preskitt2018phase}) to obtain $\mathbf{x}_{\text{est}}$.
\end{algorithmic}
\end{algorithm}
%
%
\section{Error Analysis for Algorithm~\ref{NFP-BlockPR}} \label{sec:ImprovedBlockPRanalysis}

In this section, we will prove our main result, Theorem~\ref{thm:MainThm}, which provides accuracy and robustness guarantees for Algorithm~\ref{NFP-BlockPR}.
For $\mathbf{x}\in\mathbb{C}^d$, we write its $n^{\text{th}}$ entry as $x_n\eqqcolon\vert x_n\vert e^{\mathbbm{i}\theta_n}$ and let $\mathbf{x}^{(\text{mag})} \coloneqq (\vert x_0\vert,\ldots,\vert x_{d-1}\vert)^T$ and $\mathbf{x}^{(\theta)} \coloneqq (e^{\mathbbm{i}\theta_0}, e^{\mathbbm{i}\theta_1}, \ldots, e^{\mathbbm{i}\theta_{d-1}})^T$ so that we may decompose $\mathbf{x}$ as 
\begin{equation}
\label{eqn: x decomp}
    \mathbf{x}=\mathbf{x}^{(\text{mag})} \circ \mathbf{x}^{(\theta)}.
\end{equation}
The following lemma upper bounds the total estimation error in terms of its phase and magnitude errors. For a proof, please see Appendix \ref{secA}. 

\begin{lemma}\label{lem: Full Error Bound} Let $\mathbf{x}$ be decomposed as in \eqref{eqn: x decomp}, and similarly let $\mathbf{x}_{\text{est}}$ be decomposed $\mathbf{x}_{\text{est}} = \mathbf{x}^{(\text{mag})}_{\text{est}} \circ \mathbf{x}^{(\theta)}_{\text{est}}$. Then, we have that
\begin{align} \label{lem:decoupleError}
\hspace{-1mm}\min_{\phi \in [0,2\pi)} \hspace{-.05in} \left\|\mathbf{x} - e^{\mathbbm{i}\phi}\mathbf{x}_{\text{est}}\right\|_{2} &\leq \|\mathbf{x}\|_{\infty} \hspace{-.05in} \min_{\phi \in [0,2\pi)} \hspace{-.03in} \left\|\mathbf{x}^{(\theta)}_{\text{est}} - e^{\mathbbm{i}\phi}\mathbf{x}^{(\theta)} \right\|_{2} \hspace{-.01in}+ \hspace{-.01in} \left\| \mathbf{x}^{(\text{mag})} -  \mathbf{x}^{(\text{mag})}_{\text{est}} \right\|_{2}. 
\end{align}

\end{lemma}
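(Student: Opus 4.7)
The plan is to decouple the magnitude and phase contributions via a single add-and-subtract step followed by the triangle inequality. For an arbitrary $\phi \in [0,2\pi)$, I would insert $\pm\, \mathbf{x}^{(\text{mag})} \circ e^{\mathbbm{i}\phi}\mathbf{x}^{(\theta)}_{\text{est}}$ to write
$$\mathbf{x} - e^{\mathbbm{i}\phi}\mathbf{x}_{\text{est}} \;=\; \mathbf{x}^{(\text{mag})} \circ \bigl(\mathbf{x}^{(\theta)} - e^{\mathbbm{i}\phi}\mathbf{x}^{(\theta)}_{\text{est}}\bigr) \;+\; \bigl(\mathbf{x}^{(\text{mag})} - \mathbf{x}^{(\text{mag})}_{\text{est}}\bigr) \circ e^{\mathbbm{i}\phi}\mathbf{x}^{(\theta)}_{\text{est}}.$$
Applying the triangle inequality in the $\ell_2$-norm together with the pointwise bound $\|\mathbf{a}\circ\mathbf{b}\|_2 \leq \|\mathbf{a}\|_\infty \|\mathbf{b}\|_2$ to the first summand, and using that the entries of $e^{\mathbbm{i}\phi}\mathbf{x}^{(\theta)}_{\text{est}}$ are unimodular (so Hadamard multiplication by it preserves the $\ell_2$-norm) in the second summand, would give
$$\|\mathbf{x} - e^{\mathbbm{i}\phi}\mathbf{x}_{\text{est}}\|_2 \;\leq\; \|\mathbf{x}\|_{\infty}\, \bigl\|\mathbf{x}^{(\theta)} - e^{\mathbbm{i}\phi}\mathbf{x}^{(\theta)}_{\text{est}}\bigr\|_2 \;+\; \bigl\|\mathbf{x}^{(\text{mag})} - \mathbf{x}^{(\text{mag})}_{\text{est}}\bigr\|_2,$$
where I have also used $\|\mathbf{x}^{(\text{mag})}\|_\infty = \|\mathbf{x}\|_\infty$.

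To finish, I would minimize over $\phi$. Since the magnitude-error summand on the right is independent of $\phi$, choosing $\phi^\ast \in \arg\min_\phi \|\mathbf{x}^{(\theta)} - e^{\mathbbm{i}\phi}\mathbf{x}^{(\theta)}_{\text{est}}\|_2$ and using the trivial bound $\min_\phi \|\mathbf{x} - e^{\mathbbm{i}\phi}\mathbf{x}_{\text{est}}\|_2 \leq \|\mathbf{x} - e^{\mathbbm{i}\phi^\ast}\mathbf{x}_{\text{est}}\|_2$ would yield the claimed inequality.

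There is no substantive obstacle here; the argument is essentially a triangle-inequality computation. The only conceptual point worth flagging is that the $\phi$ attaining the minimum on the left need not be the same as the $\phi^\ast$ attaining the minimum of the phase-discrepancy term on the right, but since we only need an upper bound on the left, it is enough to evaluate the left-hand side at any convenient $\phi$, and $\phi^\ast$ is the natural choice because the magnitude-error term carries no dependence on $\phi$.
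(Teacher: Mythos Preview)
Your proposal is correct and follows essentially the same route as the paper's proof: the paper also fixes $\phi$, inserts the intermediate term $\mathbf{x}^{(\text{mag})} \circ e^{\mathbbm{i}\phi}\mathbf{x}^{(\theta)}_{\text{est}}$, applies the triangle inequality together with $\|\mathbf{u}\circ\mathbf{v}\|_2 \le \|\mathbf{u}\|_\infty\|\mathbf{v}\|_2$ and the unimodularity of $\mathbf{x}^{(\theta)}_{\text{est}}$, and then minimizes over $\phi$. The only cosmetic difference is that the paper writes the phase term as $\|\mathbf{x}^{(\theta)}_{\text{est}} - e^{-\mathbbm{i}\phi}\mathbf{x}^{(\theta)}\|_2$ before minimizing, which of course yields the same minimum as your $\|\mathbf{x}^{(\theta)} - e^{\mathbbm{i}\phi}\mathbf{x}^{(\theta)}_{\text{est}}\|_2$.
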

 In light of Lemma \ref{lem: Full Error Bound}, to bound the total error of our algorithm, it suffices to consider the phase and magnitude errors separately.
In order to bound $\| \mathbf{x}^{(\text{mag})} -  \mathbf{x}^{(\text{mag})}_{\text{est}}\|_{2}$, we may utilize the following lemma which is a restatement of Lemma 3 of  \cite{IVW16}.  

\begin{lemma}[Lemma 3 of \cite{IVW16}] \label{Mag Diff Error} Let $\sigma_{\min}\left(\widecheck{\mathbf{M}}^{(p,m)}\right)$ denote the smallest singular value of the lifted measurement matrix $\widecheck{\mathbf{M}}^{(p,m)}$ from line 1 of Algorithm~\ref{NFP-BlockPR}. Then,
\begin{align*}
\left\| \mathbf{x}^{(\text{mag})} -  \mathbf{x}^{(\text{mag})}_{\text{est}}\right\|_{\infty} \leq C\sqrt{ \frac{\|\mathbf{N}\|_{F}}{\sigma_{\rm min}\left(\widecheck{\mathbf{M}}^{(p,m)}\right)}}.
\end{align*}
\end{lemma}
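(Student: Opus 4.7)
The plan is to exploit the fact that in both the noiseless and noisy settings the measurements are linearly related (through $\widecheck{\mathbf{M}}^{(p,m)}$) to a vector $\mathbf{z}$ whose entries include the diagonal entries $|x_j|^2$ of the autocorrelation matrix $\mathbf{x}\mathbf{x}^*$. Since Algorithm~\ref{NFP-BlockPR} recovers an estimate $\mathbf{z}_{\text{est}}$ by applying $\bigl(\widecheck{\mathbf{M}}^{(p,m)}\bigr)^{-1}$ directly to the vectorized measurements, the whole analysis will reduce to a one-line operator-norm bound on the inverse, followed by an elementary inequality relating differences of magnitudes to differences of squared magnitudes.

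First I would apply Lemma~\ref{lem: rewrite} to re-index the NFP measurements $\mathbf{Y}$ as the FFP-type measurements $\widetilde{\mathbf{Y}}$, observing that this rearrangement (together with the row permutation in Lemma~\ref{lem: Choice of p and m}) preserves the Frobenius norm of the additive noise. Writing $\widetilde{\mathbf{N}}$ for the correspondingly rearranged noise matrix, \eqref{eqn: Y equation} gives
\begin{equation*}
\text{vec}(\widetilde{\mathbf{Y}}) \;=\; \widecheck{\mathbf{M}}^{(p,m)} \mathbf{z}_{\text{true}} + \text{vec}(\widetilde{\mathbf{N}}),
\end{equation*}
where the entries of $\mathbf{z}_{\text{true}}$ are a subsampling of $\text{vec}(\mathbf{x}\mathbf{x}^*)$ that includes all $d$ diagonal terms $|x_j|^2$. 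Combining this with Step~2 of Algorithm~\ref{NFP-BlockPR} yields $\mathbf{z}_{\text{est}} - \mathbf{z}_{\text{true}} = \bigl(\widecheck{\mathbf{M}}^{(p,m)}\bigr)^{-1}\text{vec}(\widetilde{\mathbf{N}})$ and hence
\begin{equation*}
\|\mathbf{z}_{\text{est}} - \mathbf{z}_{\text{true}}\|_{\infty} \;\leq\; \|\mathbf{z}_{\text{est}} - \mathbf{z}_{\text{true}}\|_{2} \;\leq\; \frac{\|\widetilde{\mathbf{N}}\|_{F}}{\sigma_{\min}\!\left(\widecheck{\mathbf{M}}^{(p,m)}\right)} \;=\; \frac{\|\mathbf{N}\|_{F}}{\sigma_{\min}\!\left(\widecheck{\mathbf{M}}^{(p,m)}\right)}.
\end{equation*}

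Next I would isolate the diagonal entries. Since Step~3 of Algorithm~\ref{NFP-BlockPR} places the entries of $\mathbf{z}_{\text{est}}$ corresponding to $j=k$ on the main diagonal of $\widehat{\mathbf{X}}$, and since $X_{j,j} = |x_j|^2$ and $\widehat{X}_{j,j} = |(x_{\text{est}})_j|^2$ by \eqref{eqn: bigX}--\eqref{eqn: bigXhat}, the preceding bound yields
\begin{equation*}
\max_{j \in [d]_0} \Bigl| \, |x_j|^2 - |(x_{\text{est}})_j|^2 \, \Bigr| \;\leq\; \frac{\|\mathbf{N}\|_{F}}{\sigma_{\min}\!\left(\widecheck{\mathbf{M}}^{(p,m)}\right)}.
\end{equation*}
Finally, invoking the elementary inequality $\bigl||a|-|b|\bigr|^{2} \leq \bigl||a|^{2}-|b|^{2}\bigr|$ valid for all $a,b \in \mathbb{C}$ (since $(|a|-|b|)^2 \leq (|a|-|b|)(|a|+|b|) = \bigl||a|^2-|b|^2\bigr|$) and taking square roots entry-wise gives the claimed bound on $\|\mathbf{x}^{(\text{mag})} - \mathbf{x}^{(\text{mag})}_{\text{est}}\|_{\infty}$.

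There is no real obstacle here; the only bookkeeping subtlety is verifying that the diagonal of $\widehat{\mathbf{X}}$ is exactly recovered by a subset of the entries of $\mathbf{z}_{\text{est}}$ (so that taking a single entry-wise absolute value on each side is legal). This is immediate from the explicit arrangement of $\mathbf{z}$ in \cite{IVW16} and the reshape in Step~3 of Algorithm~\ref{NFP-BlockPR}, so the lemma follows from the linear-system bound and the elementary magnitude inequality above.
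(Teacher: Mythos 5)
Your proof is correct: the paper does not prove this lemma internally (it is quoted as Lemma 3 of \cite{IVW16}), and your argument — inverting the lifted linear system to bound the error in the estimated squared magnitudes by $\|\mathbf{N}\|_F/\sigma_{\min}\bigl(\widecheck{\mathbf{M}}^{(p,m)}\bigr)$, then passing to magnitudes via the elementary inequality $\bigl||a|-|b|\bigr|^2 \leq \bigl||a|^2-|b|^2\bigr|$ — is essentially the same argument as in that cited source. The only point worth tightening is that the noisy diagonal estimates $\widehat{X}_{j,j}$ need not be real and nonnegative, so one should define $\bigl(\mathbf{x}^{(\mathrm{mag})}_{\mathrm{est}}\bigr)_j = \sqrt{|\widehat{X}_{j,j}|}$ and insert a reverse triangle inequality $\bigl|X_{j,j} - |\widehat{X}_{j,j}|\bigr| \leq \bigl|X_{j,j} - \widehat{X}_{j,j}\bigr|$ before taking square roots; this costs nothing and closes the gap.
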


Having obtained Lemma~\ref{Mag Diff Error}, we are now able to prove the following theorem bounding the total estimation error.

\begin{theorem}\label{thm: Particular Full Error Bound} 
Let $\mathbf{p}$ and $\mathbf{m}$ be the admissible PSF, mask pair defined in Lemma \ref{lem: Choice of p and m}.  Then, we have that
\begin{align*}
\min_{\phi \in [0,2\pi)} \left\|\mathbf{x} - e^{\mathbbm{i}\phi}\mathbf{x}_{\text{est}}\right\|_{2} \leq \|\mathbf{x}\|_{\infty} \min_{\phi \in [0,2\pi)} \left\|\mathbf{x}^{(\theta)}_{\text{est}} - e^{\mathbbm{i}\phi}\mathbf{x}^{(\theta)}\right\|_{2} + C\sqrt{d\delta \|\mathbf{N}\|_{F}}.
\end{align*}
\end{theorem}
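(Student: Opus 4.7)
The plan is to combine the decoupling inequality of Lemma~\ref{lem: Full Error Bound} with the magnitude-error bound of Lemma~\ref{Mag Diff Error} and the condition-number estimate of Lemma~\ref{lem: Choice of p and m}. Concretely, I would first apply Lemma~\ref{lem: Full Error Bound} to the pair $(\mathbf{x},\mathbf{x}_{\text{est}})$ (after decomposing both according to \eqref{eqn: x decomp}), which immediately produces the phase term $\|\mathbf{x}\|_\infty \min_\phi \|\mathbf{x}^{(\theta)}_{\text{est}}-e^{\mathbbm{i}\phi}\mathbf{x}^{(\theta)}\|_2$ already present on the right-hand side of the claim, plus the magnitude term $\|\mathbf{x}^{(\text{mag})}-\mathbf{x}^{(\text{mag})}_{\text{est}}\|_2$. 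All the remaining work is to bound the magnitude term by $C\sqrt{d\delta\,\|\mathbf{N}\|_F}$.

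To control the magnitude term, I would pass from $\ell^2$ to $\ell^\infty$ via the trivial estimate $\|\mathbf{v}\|_2 \leq \sqrt{d}\,\|\mathbf{v}\|_\infty$ for $\mathbf{v}\in\mathbb{C}^d$, and then invoke Lemma~\ref{Mag Diff Error}, giving
\begin{align*}
\bigl\| \mathbf{x}^{(\text{mag})} -  \mathbf{x}^{(\text{mag})}_{\text{est}}\bigr\|_{2} \;\leq\; \sqrt{d}\,\bigl\| \mathbf{x}^{(\text{mag})} -  \mathbf{x}^{(\text{mag})}_{\text{est}}\bigr\|_{\infty} \;\leq\; C\sqrt{\frac{d\,\|\mathbf{N}\|_F}{\sigma_{\min}\!\left(\widecheck{\mathbf{M}}^{(p,m)}\right)}}.
\end{align*}
Next, since $\mathbf{p}$ and $\mathbf{m}$ are the specific PSF/mask pair of Lemma~\ref{lem: Choice of p and m}, that lemma together with Theorem~\ref{Condition Theorem} yields $\sigma_{\min}(\widecheck{\mathbf{M}}^{(p,m)}) \geq C/\delta$, so the last display is at most $C\sqrt{d\delta\,\|\mathbf{N}\|_F}$ after absorbing constants.

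Putting these two pieces together gives exactly the stated bound. There is no serious obstacle here: the argument is a straight chaining of already-proved results, with the only modestly delicate point being that Lemma~\ref{Mag Diff Error} is stated in the $\ell^\infty$ norm while Lemma~\ref{lem: Full Error Bound} requires an $\ell^2$ bound on the magnitude difference, so the factor of $\sqrt{d}$ introduced by the norm conversion must be tracked carefully to ensure the final $\sqrt{d\delta}$ dependence (rather than $\delta$ alone) comes out correctly. All absolute constants can be merged into the single constant $C$ at the end.
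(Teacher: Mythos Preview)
Your proposal is correct and matches the paper's proof essentially step for step: apply Lemma~\ref{lem: Full Error Bound}, convert the magnitude term from $\ell^2$ to $\ell^\infty$ via $\|\mathbf{v}\|_2\le\sqrt{d}\,\|\mathbf{v}\|_\infty$, invoke Lemma~\ref{Mag Diff Error}, and then use Lemma~\ref{lem: Choice of p and m} together with Theorem~\ref{Condition Theorem} to replace $\sigma_{\min}(\widecheck{\mathbf{M}}^{(p,m)})^{-1}$ by $C\delta$. There is nothing to add.
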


\begin{proof}
Combining Lemmas \ref{lem: Full Error Bound}~and~\ref{Mag Diff Error} along with the inequality $\|\mathbf{u}\|_2\leq \sqrt{d}\|\mathbf{u}\|_\infty$, implies that 
\begin{align}\label{eqn: general t3}
    \min_{\phi \in [0,2\pi)} \hspace{-.05in} \|\mathbf{x} - e^{\mathbbm{i}\phi}\mathbf{x}_{\text{est}}\|_{2} &\leq \|\mathbf{x}\|_{\infty} \hspace{-.05in} \min_{\phi \in [0,2\pi)} \hspace{-.03in} \left\|\mathbf{x}^{(\theta)}_{\text{est}} - e^{\mathbbm{i}\phi}\mathbf{x}^{(\theta)} \right\|_{2} \hspace{-.01in}+ \hspace{-.01in} C\sqrt{ \frac{d \|\mathbf{N}\|_{F}}{\sigma_{\rm min}\left(\widecheck{\mathbf{M}}^{(p,m)}\right)}}.
\end{align}
As noted in Lemma~\ref{lem: Choice of p and m}, the singular values of $\widecheck{\mathbf{M}}^{(p,m)}$ are the same as those of $\widecheck{\mathbf{M}}^{(\text{fpr})}$. Therefore, applying Theorem~\ref{Condition Theorem} then finishes the proof.
\end{proof}

\begin{remark}
Note that the inequality \eqref{eqn: general t3} in the proof of Theorem~\ref{thm: Particular Full Error Bound} holds any time ${\bf m} \in \mathbb{C}^d$ satisfies ${\rm supp}({\bf m}) \subseteq [\delta]_0$ and either $(i)$ ${\bf p} \in \mathbb{C}^d$ is $2 \delta - 1$ periodic for $2 \delta - 1$ dividing $d$, or else $(ii)$ one instead collects NFP measurements \eqref{eqn: noisy near field} at all $(k,\ell) \in \mathcal{S}'$ as in Remark~\ref{rmk: full rewritten}. Therefore, results analogous to Theorem \ref{thm: Particular Full Error Bound} may be produced for any such $\mathbf{p}$ and $\mathbf{m}$ pairs such that $\sigma_{\rm min}\left(\widecheck{\mathbf{M}}^{(p,m)}\right) > 0$.  Furthermore, the value of this minimal singular value is straightforward to check numerically in practice.
\end{remark}

 In order to bound $\left\|\mathbf{x}^{(\theta)}_{\text{est}} - e^{\mathbbm{i}\phi}\mathbf{x}^{(\theta)} \right\|_{2}$, we will need a few additional definitions. 
 As in \eqref{eqn: bigX}, let $\mathbf{X}$ denote the partial autocorrelation matrix corresponding to the true signal $\mathbf{x}$, as in \eqref{eqn: bigXhat}, and let  $\widehat{\mathbf{X}}$ denote the partial autocorrelation matrix corresponding to $\mathbf{x}_{\text{est}}$, i.e., the matrix obtained in  step 3 of Algorithm~\ref{NFP-BlockPR}.
Let $G = (V,E,\mathbf{W})$ be a weighted graph whose vertices are given by $V = [d]_0$, whose edge set $E$ is taken to be the set of  $(i,j)$ such that $i\neq j$ and $\lvert i-j\rvert  \text{ mod } d <\delta$, and whose weight matrix $\mathbf{W}$ is  
defined entrywise by 
\begin{align}\label{equ:weights}
W_{i,j} = 
\begin{cases}
\lvert\widehat{X}_{i,j}\rvert^2, \; &0 < \lvert i - j \rvert \; \text{mod} \; d < \delta\cr
0, \; &\text{otherwise}\cr
\end{cases}.
\end{align}
Letting $\mathbf{A}_G$ denote the \textit{unweighted} adjacency matrix of $G$, we observe that by construction, we have $\mathbf{X} = (\mathbf{I} + \mathbf{A}_G) \circ \mathbf{x}\mathbf{x}^*$ and  $\widehat{\mathbf{X}} = (\mathbf{I} + \mathbf{A}_G) \circ \mathbf{x}_{\text{est}}\mathbf{x}_{\text{est}}^*$. Letting $\mathbf{D}$ denote the \textit{weighted} degree matrix, we define the \textit{unnormalized graph Laplacian} by  $\mathbf{L}_G \coloneqq \mathbf{D} - \mathbf{W}$ and the \textit{normalized graph Laplacian} by $\mathbf{L}_N \coloneqq \mathbf{D}^{-1/2} \mathbf{L}_G \mathbf{D}^{-1/2}$. It is well known that both $\mathbf{L}_G$ and  $\mathbf{L}_N$ are positive semi-definite with a minimal eigenvalue of zero (see, e.g., Section 3.1, \cite{sagt}). We will let $\tau_G$ denote the spectral gap (second smallest eigenvalue) of $\mathbf{L}_G$. It is well known that if $G$ is connected then $\tau_G$ is strictly positive (see, e.g., Lemma 3.1.1, \cite{sagt}). 

In \cite{filbir2021recovery}, the authors used a weighted graph approach to prove the following result which bounds $\underset{\phi \in [0,2\pi)}{\min} \|\mathbf{x}^{(\theta)}_{\text{est}} - e^{\mathbbm{i}\phi}\mathbf{x}^{(\theta)}\|_{2}$ .

\begin{theorem}[Corollary 3, \cite{filbir2021recovery}]\label{thm: Improved Phase Bound} Consider the weighted graph $G = (V,E,\mathbf{W})$ described in the previous paragraph with weight matrix given as in \eqref{equ:weights}.
Let $\tau_{G}$ denote the spectral gap of the associated unnormalized Laplacian $\mathbf{L}_{G}$. Then we have that
\begin{align*}
 \underset{\phi \in [0,2\pi)}{\min} \left\|\mathbf{x}^{(\theta)}_{\text{est}} - e^{\mathbbm{i}\phi}\mathbf{x}^{(\theta)}\right\|_{2} \leq C \sqrt{1 + \|\mathbf{x}_{\rm{est}}\|_{\infty}} \cdot \dfrac{\|\mathbf{X} - \widehat{\mathbf{X}}\|_F}{\sqrt{\tau_{G}}}, \; C \in \mathbb{R}^+.
\end{align*}
\end{theorem}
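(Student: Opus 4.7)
The plan is to adopt the weighted-graph angular synchronization strategy underlying Algorithm 3 of \cite{preskitt2018phase}. The first step is to construct a Hermitian ``phase-twisted'' Laplacian whose bottom eigenvector encodes $\mathbf{x}^{(\theta)}$ in the noise-free limit. Specifically, I would define
\[
\mathbf{L}_\theta \;\coloneqq\; \mathbf{D} - \mathbf{W}\circ \mathbf{X}^{(\theta)}, \qquad \mathbf{L}_{\widehat\theta} \;\coloneqq\; \mathbf{D} - \mathbf{W}\circ \widehat{\mathbf{X}}^{(\theta)},
\]
where $\mathbf{X}^{(\theta)}$ and $\widehat{\mathbf{X}}^{(\theta)}$ are taken to be zero off the edge set $E$. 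A direct computation using $X^{(\theta)}_{ij} = e^{\mathbbm{i}(\theta_i - \theta_j)}$ shows that $\mathbf{L}_\theta \mathbf{x}^{(\theta)} = \mathbf{0}$, and conjugation by $\operatorname{diag}(\mathbf{x}^{(\theta)})$ reveals that $\mathbf{L}_\theta$ is unitarily equivalent to the ordinary weighted Laplacian $\mathbf{L}_G$. In particular, its second smallest eigenvalue is exactly $\tau_G$, and $\mathbf{x}^{(\theta)}$ spans the one-dimensional nullspace whenever $G$ is connected.

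Next I would identify $\mathbf{x}^{(\theta)}_{\text{est}}$ with the bottom eigenvector of $\mathbf{L}_{\widehat\theta}$ (renormalized entrywise to the unit circle) and bound the $\ell_2$ phase error by a Rayleigh-quotient perturbation argument rather than a direct Davis--Kahan estimate. Writing $\mathbf{x}^{(\theta)}_{\text{est}} = c\,\mathbf{x}^{(\theta)} + \mathbf{v}$ with $\mathbf{v}\perp\mathbf{x}^{(\theta)}$, the gap inequality gives $\langle \mathbf{v}, \mathbf{L}_\theta \mathbf{v}\rangle \geq \tau_G\, \|\mathbf{v}\|_2^2$, while optimality of $\mathbf{x}^{(\theta)}_{\text{est}}$ for $\mathbf{L}_{\widehat\theta}$, combined with $\mathbf{L}_\theta \mathbf{x}^{(\theta)} = \mathbf{0}$, yields $\langle \mathbf{v}, \mathbf{L}_\theta \mathbf{v}\rangle \lesssim (1 + \|\mathbf{x}_{\text{est}}\|_\infty)\,\|\mathbf{L}_{\widehat\theta} - \mathbf{L}_\theta\|_F$. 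Taking a square root then produces both the $\sqrt{\tau_G}$ denominator and the $\sqrt{1 + \|\mathbf{x}_{\text{est}}\|_\infty}$ prefactor appearing in the statement.

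The final step is to convert $\|\mathbf{L}_{\widehat\theta} - \mathbf{L}_\theta\|_F = \|\mathbf{W}\circ(\widehat{\mathbf{X}}^{(\theta)} - \mathbf{X}^{(\theta)})\|_F$ into the target quantity $\|\widehat{\mathbf{X}} - \mathbf{X}\|_F$. This is where the choice of weights $W_{ij} = |\widehat{X}_{ij}|^2$ is crucial: each entry of $\mathbf{W}\circ\widehat{\mathbf{X}}^{(\theta)}$ equals $\widehat{X}_{ij}\,|\widehat{X}_{ij}|$, so the potentially unstable normalization $|\widehat{X}_{ij}|^{-1}$ used to form $\widehat{\mathbf{X}}^{(\theta)}$ is absorbed by the weight. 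I expect this cancellation to be the main technical obstacle: it is precisely what avoids a factor of $\lvert\mathbf{x}_{\text{est}}\rvert_{\min}^{-1}$ (as would arise in the unweighted top-eigenvector approach of Section \ref{sec: Eigenvector-Based Angular Synchronization}), and aggregating the entrywise Lipschitz estimate $\lvert |a|a - |b|b \rvert \lesssim (|a|+|b|)|a-b|$ into a clean Frobenius bound, together with verifying that the spectral gap of $\mathbf{L}_{\widehat\theta}$ remains close to $\tau_G$, is what ultimately delivers the linear-in-$d$ scaling needed for Theorem \ref{thm:MainThm}.
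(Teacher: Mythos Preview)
The paper does not supply a proof of this statement: it is quoted as Corollary~3 of \cite{filbir2021recovery} and invoked as a black box in the proof of Theorem~\ref{thm:MainThm} (see also the remark immediately following the theorem, which explains the origin of the $\sqrt{1+\|\mathbf{x}_{\rm est}\|_\infty}$ factor by pointing to Theorem~4 of \cite{filbir2021recovery}). There is therefore nothing in the present paper to compare your proposal against.

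For what it is worth, your sketch is in the spirit of the cited argument: the construction of the phase-twisted (connection) Laplacian $\mathbf{L}_\theta = \mathbf{D} - \mathbf{W}\circ\mathbf{X}^{(\theta)}$, its unitary equivalence to $\mathbf{L}_G$ via $\operatorname{diag}(\mathbf{x}^{(\theta)})$, and the Rayleigh-quotient perturbation step leading to $\sqrt{\tau_G}$ in the denominator and the tightness penalty $\sqrt{1+\|\mathbf{x}_{\rm est}\|_\infty}$ are exactly the ingredients used in \cite{filbir2021recovery} and in Algorithm~3 of \cite{preskitt2018phase}. The one place to be careful in your outline is the last step: while $W_{ij}\widehat{X}^{(\theta)}_{ij} = |\widehat X_{ij}|\,\widehat X_{ij}$ cancels cleanly, the companion term $W_{ij}X^{(\theta)}_{ij} = |\widehat X_{ij}|^2\, X_{ij}/|X_{ij}|$ does not, so the passage from $\|\mathbf{W}\circ(\widehat{\mathbf{X}}^{(\theta)}-\mathbf{X}^{(\theta)})\|_F$ to $\|\widehat{\mathbf{X}}-\mathbf{X}\|_F$ requires a short additional argument (an add-and-subtract plus the Lipschitz bound you mention) rather than a one-line substitution.
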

\begin{remark} The $\sqrt{1 + \|\mathbf{x}_{\text{est}}\|_{\infty}}$ term is referred to in Theorem 4 of \cite{filbir2021recovery} as a \textit{tightness penalty} which is applied when taking the non-convex constraint and performing an eigenvector relaxation, allowing us to use the method of angular synchronization involving the weighted Laplacian given in Algorithm 3 of \cite{preskitt2018phase}.
\end{remark}
In order to utilize Theorem~\ref{thm: Improved Phase Bound} we require both an upper bound of $\|\mathbf{X} - \widehat{\mathbf{X}}\|_F$ and a lower bound for the spectral gap $\tau_G$.  These are provided by the next two lemmas.

\begin{lemma} \label{lem: Frobenius Bound} Let $\mathbf{p}$ and $\mathbf{m}$ be defined as in Lemma \ref{lem: Choice of p and m}.  Then, $\|\mathbf{X} - \widehat{\mathbf{X}}\|_{F} \leq C \delta \|\mathbf{N}\|_{F}$.
\end{lemma}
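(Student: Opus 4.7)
\textbf{Proof proposal for Lemma~\ref{lem: Frobenius Bound}.}  The plan is to trace how the additive noise $\mathbf{N}$ propagates through the inversion step (Step~2) of Algorithm~\ref{NFP-BlockPR} and to bound the result using the smallest singular value bound for $\widecheck{\mathbf{M}}^{(p,m)}$ provided by Lemma~\ref{lem: Choice of p and m} and Theorem~\ref{Condition Theorem}.

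First, I would invoke Lemma~\ref{lem: rewrite} to reinterpret the noisy NFP measurements $Y_{k,\ell}$ as a matrix $\widetilde{\mathbf{Y}}$ of local FFP-type measurements, related to $\mathbf{Y}$ by a bijective rearrangement of entries.  Writing $\widetilde{\mathbf{N}}$ for the corresponding rearrangement of the noise matrix, we have $\|\widetilde{\mathbf{N}}\|_F = \|\mathbf{N}\|_F$, and in vectorized form
\begin{equation*}
\mathrm{vec}(\widetilde{\mathbf{Y}}) \;=\; \widecheck{\mathbf{M}}^{(p,m)} \mathbf{z}_{\mathrm{true}} + \mathrm{vec}(\widetilde{\mathbf{N}}),
\end{equation*}
where $\mathbf{z}_{\mathrm{true}} \in \mathbb{C}^{D}$ is the vector of non-zero entries of $\mathbf{x}\mathbf{x}^{*}$ determined by the truncation pattern of $\mathbf{X}$, arranged so that reshaping it produces $\mathbf{X}$ exactly.

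Next, applying the inverse $\bigl(\widecheck{\mathbf{M}}^{(p,m)}\bigr)^{-1}$ (which exists by Lemma~\ref{lem: Choice of p and m}) produces the estimate $\widehat{\mathbf{z}} = \mathbf{z}_{\mathrm{true}} + \bigl(\widecheck{\mathbf{M}}^{(p,m)}\bigr)^{-1}\mathrm{vec}(\widetilde{\mathbf{N}})$, and reshaping $\widehat{\mathbf{z}}$ as in Step~3 of the algorithm gives $\widehat{\mathbf{X}}$.  Since reshaping is an isometry with respect to the $\ell^{2}$/Frobenius norm, and since $\mathbf{X}$ and $\widehat{\mathbf{X}}$ share the same (truncated) support pattern, we obtain
\begin{equation*}
\|\mathbf{X} - \widehat{\mathbf{X}}\|_{F} \;=\; \|\mathbf{z}_{\mathrm{true}} - \widehat{\mathbf{z}}\|_{2} \;\leq\; \bigl\|\bigl(\widecheck{\mathbf{M}}^{(p,m)}\bigr)^{-1}\bigr\|_{\mathrm{op}}\, \|\widetilde{\mathbf{N}}\|_{F} \;=\; \frac{\|\mathbf{N}\|_{F}}{\sigma_{\min}\bigl(\widecheck{\mathbf{M}}^{(p,m)}\bigr)}.
\end{equation*}

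Finally, I would conclude by applying the spectral bound.  By Lemma~\ref{lem: Choice of p and m}, $\widecheck{\mathbf{M}}^{(p,m)}$ shares its singular values with $\widecheck{\mathbf{M}}^{(\mathrm{fpr})}$, and Theorem~\ref{Condition Theorem} gives $\sigma_{\min}\bigl(\widecheck{\mathbf{M}}^{(\mathrm{fpr})}\bigr) \geq C/\delta$.  Substituting yields $\|\mathbf{X} - \widehat{\mathbf{X}}\|_{F} \leq C\delta\|\mathbf{N}\|_{F}$, as claimed.  I do not foresee a main obstacle: the argument is essentially bookkeeping, with the only subtlety being to carefully track that the permutations/reshapes used to pass from $\mathbf{Y}$ to $\widetilde{\mathbf{Y}}$ to $\mathbf{z}$ to $\widehat{\mathbf{X}}$ are all $\ell^{2}$-isometries, so that no hidden dimension factor is introduced between $\|\mathbf{N}\|_{F}$ and $\|\widetilde{\mathbf{N}}\|_{F}$ or between $\|\mathbf{z}_{\mathrm{true}} - \widehat{\mathbf{z}}\|_{2}$ and $\|\mathbf{X} - \widehat{\mathbf{X}}\|_{F}$.
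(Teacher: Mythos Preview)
Your proposal is correct and follows essentially the same argument as the paper's own proof: both pass to the lifted linear system, invert $\widecheck{\mathbf{M}}^{(p,m)}$, bound the error by $\|\mathbf{N}\|_F/\sigma_{\min}\bigl(\widecheck{\mathbf{M}}^{(p,m)}\bigr)$, and then invoke Lemma~\ref{lem: Choice of p and m} together with Theorem~\ref{Condition Theorem} to obtain the factor $\delta$. Your version is in fact slightly more explicit than the paper's about the rearrangement $\mathbf{Y}\to\widetilde{\mathbf{Y}}$ and the reshape $\mathbf{z}\to\widehat{\mathbf{X}}$ being $\ell^2$-isometries, which the paper leaves implicit.
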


\begin{proof} 

Let ${\rm vec}: \mathbb{C}^{d \times d} \rightarrow \mathbbm{C}^D$ 
be the vectorization operator considered in \eqref{eqn: Y equation}. 
It follows from \eqref{equ:FFPmeasurements}, \eqref{eqn: Y equation}, and Step 2 of Algorithm \ref{NFP-BlockPR}, that
$$\text{vec}(\mathbf{Y}) = \widecheck{\mathbf{M}}\text{vec}(\widehat{\mathbf{X}})\quad\text{and}\quad\text{vec}(\mathbf{Y}-\mathbf{N}) = \widecheck{\mathbf{M}}\text{vec}(\mathbf{X}). $$
Therefore,
\begin{align*}
    \left\|\mathbf{X}-\widehat{\mathbf{X}}\right\|_F \leq \left\| \left(\widecheck{\mathbf{M}}^{(p,m)}\right)^{-1} \text{vec}(\mathbf{N}) \right\|_2
    \leq \frac{\|\text{vec}(\mathbf{N})\|_2}{\sigma_{\rm min}\left(\widecheck{\mathbf{M}}^{(p,m)}\right)} \leq 
    C \delta \|\mathbf{N}\|_F,
\end{align*}
where final inequality again utilizes  Lemma~\ref{lem: Choice of p and m} and Theorem~\ref{Condition Theorem}.
\end{proof}

\begin{lemma} \label{lem: spectral gap}
For the graph $G$ considered in Theorem~\ref{thm: Improved Phase Bound}, we have that $$\tau_{G} \geq \dfrac{\lvert\mathbf{x}_{\text{est}}\rvert_{\min}^4}{\|\mathbf{x}_{\text{est}}\|_{\infty}^2} \dfrac{4(\delta-1)}{d^2}.$$
\end{lemma}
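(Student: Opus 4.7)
My plan is to bound $\tau_G$ from below by comparing the weighted Laplacian $\mathbf{L}_G$ to the unnormalized Laplacian of the underlying unweighted connectivity graph, and then to diagonalize that Laplacian using its circulant (Cayley-graph) structure. Since, on every edge $(i,j)\in E$, the weight satisfies $W_{i,j}=|(x_{\text{est}})_i|^2|(x_{\text{est}})_j|^2\geq |(\mathbf{x}_{\text{est}})_{\min}|^4$, the Dirichlet-form comparison
$$v^{*}\mathbf{L}_G v \;=\; \tfrac{1}{2}\!\!\sum_{(i,j)\in E}\!\! W_{i,j}|v_i-v_j|^2 \;\geq\; |(\mathbf{x}_{\text{est}})_{\min}|^4\cdot v^{*}\mathbf{L}_{G_0}v,$$
valid for every $v\in\mathbb{C}^d$, yields the Loewner ordering $\mathbf{L}_G\succeq |(\mathbf{x}_{\text{est}})_{\min}|^4\mathbf{L}_{G_0}$, where $\mathbf{L}_{G_0}$ is the unweighted Laplacian of the graph $G_0=([d]_0,E)$. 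In particular $\tau_G\geq|(\mathbf{x}_{\text{est}})_{\min}|^4\tau_{G_0}$, which reduces the problem to a deterministic spectral gap estimate.

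Since $G_0$ is the Cayley graph of $\mathbb{Z}/d\mathbb{Z}$ with connection set $\{\pm 1,\pm 2,\ldots,\pm(\delta-1)\}$, the Laplacian $\mathbf{L}_{G_0}$ is circulant and is therefore diagonalized by the discrete Fourier transform. Its eigenvalues are
$$\lambda_k \;=\; 2\sum_{j=1}^{\delta-1}\bigl(1-\cos(2\pi jk/d)\bigr),\qquad k\in[d]_0,$$
and by the symmetry $\lambda_k=\lambda_{d-k}$ the spectral gap is $\tau_{G_0}=\lambda_1$. Applying the standard trigonometric bound $1-\cos\theta\geq 2\theta^2/\pi^2$ on $|\theta|\leq\pi$ (whose hypothesis is satisfied since $2\pi j/d\leq\pi$ for $1\leq j\leq \delta-1$ under the standing assumption $2\delta-1\mid d$), together with the trivial estimate $\sum_{j=1}^{\delta-1}j^2\geq \delta-1$, gives $\tau_{G_0}\geq c(\delta-1)/d^2$ for an explicit universal constant $c$.

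Assembling these two ingredients produces a bound of the form $\tau_G\geq C|(\mathbf{x}_{\text{est}})_{\min}|^4(\delta-1)/d^2$, which is strong enough to imply the claim whenever $\|\mathbf{x}_{\text{est}}\|_\infty$ is not too small. The additional $\|\mathbf{x}_{\text{est}}\|_\infty^{-2}$ normalization in the statement is the subtle piece and, I expect, the main obstacle. Its natural origin is the upper estimate $D_{ii}\leq 2(\delta-1)\|\mathbf{x}_{\text{est}}\|_\infty^4$ on the weighted degrees, which suggests rerouting through the normalized Laplacian $\mathbf{L}_N=\mathbf{D}^{-1/2}\mathbf{L}_G\mathbf{D}^{-1/2}$: one obtains a Cheeger-style estimate for $\tau_N$ on the uniformly lower-bounded edge weights (which contributes the $(\delta-1)/d^2$ factor) and then converts back to a lower bound on $\tau_G$ via the relation between the two Laplacians, where the upper bound on $D_{\max}$ produces exactly the $\|\mathbf{x}_{\text{est}}\|_\infty^{-2}$ denominator. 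The PSD reduction and the Fourier computation are routine; the careful bookkeeping in this final normalization step to land on the precise prefactor $4(\delta-1)/d^2$ is what requires real work.
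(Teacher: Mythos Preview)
Your route—the Dirichlet-form comparison $\mathbf{L}_G\succeq|(\mathbf{x}_{\mathrm{est}})_{\min}|^4\mathbf{L}_{G_0}$ followed by explicit Fourier diagonalization of the circulant $\mathbf{L}_{G_0}$—is correct and genuinely different from what the paper does. The paper never exploits the circulant structure; instead it invokes a general weighted-graph inequality (Theorem~\ref{General Weighted Spectral Gap} in the appendix, proved by embedding $K_d$ into $G$ along shortest paths) of the form
\[
\tau_G \;\geq\; \frac{2\,W_{\min}^2}{W_{\max}\,(d-1)\,\mathrm{diam}(G_{\mathrm{unw}})},
\]
and then substitutes bounds on $W_{\min}$, $W_{\max}$, and the unweighted diameter. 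Your Fourier approach is sharper for this particular graph: keeping $\sum_{j=1}^{\delta-1}j^2$ rather than the crude $\geq\delta-1$ would even improve the $\delta$-dependence.

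Your last paragraph, however, misreads the situation. The factor $\|\mathbf{x}_{\mathrm{est}}\|_\infty^{-2}$ in the stated lemma is not a subtle strengthening to be recovered; it is simply the $1/W_{\max}$ that the path-comparison method inevitably produces, and it \emph{weakens} the bound whenever $\|\mathbf{x}_{\mathrm{est}}\|_\infty\geq 1$. Your direct estimate $\tau_G\geq 16|(\mathbf{x}_{\mathrm{est}})_{\min}|^4(\delta-1)/d^2$ is already what is needed downstream in Theorem~\ref{thm:MainThm}, and the normalized-Laplacian detour you sketch is unnecessary. (When $\|\mathbf{x}_{\mathrm{est}}\|_\infty$ is small your bound does not literally imply the lemma as written, but that is a quirk of the paper's formulation, not a gap in your argument.) The one point in your proof that actually needs a line of justification is the claim $\tau_{G_0}=\lambda_1$: that the minimum of $\lambda_k$ over $k\neq 0$ is attained at $k=1$ is true for this connection set but is not automatic for circulant graphs in general.
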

\begin{proof}
Letting $W_{\min}$ and $W_{\text{max}}$ be the minimum and maximum value of any of the (nonzero) entries of $\mathbf{W}$, we have that $W_{\min}\geq \lvert \mathbf{x}_{\text{est}}\rvert^2_{\min}$,  $W_{\max}\leq \|\mathbf{x}_{\text{est}}\|^2_{\infty}$, and $\text{diam}(G_{\text{unw}})\geq d/(2\delta-1)$ (where $\text{diam}(G_{\text{unw}})$ is the diameter of the unweighted version of $G$). Therefore,
by Theorem \ref{General Weighted Spectral Gap} in Appendix \ref{secB}, we have that
\begin{align*}
\tau_{G} \geq \dfrac{\lvert\mathbf{x}_{\text{est}}\rvert_{\min}^4}{\|\mathbf{x}_{\text{est}}\|_{\infty}^2}\dfrac{2}{(d-1)\text{diam}(G)} \geq 
 \dfrac{\lvert\mathbf{x}_{\text{est}}\rvert_{\min}^4}{\|\mathbf{x}_{\text{est}}\|_{\infty}^2} \dfrac{4(\delta-1)}{d^2}.
\end{align*}
\end{proof}

We shall now finally prove our main result.
\begin{proof}[The Proof of Theorem \ref{thm:MainThm}]
By Theorem~\ref{thm: Particular Full Error Bound}, we have 
\begin{align*}
\min_{\phi \in [0,2\pi)} \left\|\mathbf{x} - e^{\mathbbm{i}\phi}\mathbf{x}_{\text{est}}\right\|_{2} \leq \|\mathbf{x}\|_{\infty} \min_{\phi \in [0,2\pi)} \left\|\mathbf{x}^{(\theta)}_{\text{est}} - e^{\mathbbm{i}\phi}\mathbf{x}^{(\theta)}\right\|_{2} + C\sqrt{d\delta \|\mathbf{N}\|_{F}}.
\end{align*} 
Combining Theorem ~\ref{thm: Improved Phase Bound} with Lemmas~\ref{lem: Frobenius Bound}~and~\ref{lem: spectral gap} yields
\begin{align*}
 \underset{\phi \in [0,2\pi)}{\min} \left\|\mathbf{x}^{(\theta)}_{\text{est}} - e^{\mathbbm{i}\theta}\mathbf{x}^{(\theta)}\right\|_{2} &\leq C \sqrt{1 + \|\mathbf{x}_{\rm{est}}\|_{\infty}} \cdot \dfrac{\|\mathbf{X} - \widehat{\mathbf{X}}\|_F}{\sqrt{\tau_{G}}}\\
&\leq C \sqrt{1 + \|\mathbf{x}_{\rm{est}}\|_{\infty}} \cdot \frac{d\sqrt{\delta}\|\mathbf{x}_{\text{est}}\|_{\infty}\|\mathbf{N} \|_F}
{\lvert\mathbf{x}_{\text{est}}\rvert_{\min}^2}.
\end{align*}The result follows.
\end{proof}



\section{An Alternate Approach:  Near-Field Ptychography via Wirtinger Flow}
\label{sec:WTF4NFP}


In the previous sections we have demonstrated a particular point spread function and mask for which NFP measurements are guaranteed to allow image reconstruction via Algorithm~\ref{NFP-BlockPR}. However, in many real-world scenarios the particular mask and PSF combination considered above are not of the type actually used in practice. For example, in the setting considered in \cite{zhang2019near} the PSF $\mathbf{p}$ ideally behaves like a low-pass filter (so that, e.g., $\widehat{\mathbf{p}}$ is supported in $\{ k\in\mathbb{Z} \vert -K< k \text{ mod } d < K\}$ for some $K \ll d$), and the mask $m$ is globally supported in $[d]_0$.  In contrast, the PSF considered above has its nonzero Discrete Fourier coefficients at frequencies in $\{k d / (2 \delta - 1)\}_{k \in  [2 \delta - 1]_0}$ (and thus its Fourier support includes large frequencies), and the mask $m$ has small physical support in $[\delta]_0$.  
This motivates us to explore a variant of the well known Wirtinger Flow algorithm \cite{candes2015phasewirtinger} in this section. This  method, Algorithm \ref{NFP Wirtinger Flow}, can be applied to more general set of PSF and mask pairs than Algorithm \ref{NFP-BlockPR} considered in the previous section.



Suppose we have noiseless NFP measurements of the form
\begin{align*}
Y_{k,\ell} = \lvert (\mathbf{p} * (S_k \mathbf{m} \circ \mathbf{x}))_\ell\rvert^2, ~~ (k,\ell) \in \hspace{-.2in} \bigcup_{0\leq k \leq K-1} \{d - k\} \times \{K-L+1,\dots,k\} \hspace{-.1in} \mod d,
\end{align*}
where $K,L \in [d+1]_0 \setminus \{ 0 \}$.
Then by the same argument used in Lemma \ref{lem: rewrite} (see also in Remark \ref{rmk: full rewritten}), we can manipulate the measurements above so that we have 
\begin{align*}
\widetilde{Y}_{k,\ell} = \lvert\langle \widecheck{\mathbf{m}}_{\ell}^{(p,m)} , S_k\mathbf{x} \rangle\rvert^2, \quad (k,\ell) \in [K]_0 \times [L]_0,
\end{align*}
where the masks $\widecheck{\mathbf{m}}_{\ell}^{(p,m)}$ are defined as in \eqref{equ:NFPmaskdef}. We may then reshape these measurements into a vector $\mathbf{y}\in\mathbb{C}^{KL}$ with entries given by
\begin{align} \label{equ:vecmeasdef}
y_n := \lvert\langle \widecheck{\mathbf{m}}^{(p,m)}_{n \hspace{-.05in} \mod L} , S_{\lfloor\frac{n}{L}\rfloor} \mathbf{x} \rangle\rvert^2, \quad \forall n \in [KL]_0.
\end{align}
After this reformulation, we may then apply a standard Wirtinger Flow Algorithm with spectral initialization. Full details are given below in Algorithm~\ref{NFP Wirtinger Flow}.

\begin{algorithm}[H]
\caption{NFP Wirtinger Flow} \label{NFP Wirtinger Flow}
\begin{algorithmic}

\Require \\
1) Size $d \in \mathbb{N}$, 
number of iterations $T$, stepsizes $\mu_{\tau +1}$ for $\tau \in [T]_0$.\footnotemark \\
2) PSF $\mathbf{p} \in \mathbb{C}^d$, mask $\mathbf{m} \in \mathbb{C}^d$, $\widecheck{\mathbf{m}}_{\ell}^{(p,m)} = \overline{S_{\ell} \widetilde{\mathbf{p}} \circ \mathbf{m}}$.\\
3) Noisy measurements $Y_{k,\ell} = \lvert (\mathbf{p} * (S_k \mathbf{m} \circ \mathbf{x}))_\ell \rvert^2 + N_{k,\ell}$.
\Ensure $\mathbf{x}_{\text{est}} \in \mathbb{C}^d$ with $\mathbf{x}_{\text{est}} \approx e^{i \theta}\mathbf{x}$ for some $\theta \in [0,2\pi]$
\State 1) Rearrange measurement matrix to form measurement vector ${\bf y}$ in \eqref{equ:vecmeasdef}.
\State 2) Compute ${\bf z}_0$ using spectral method (Algorithm 1 in \cite{candes2015phasewirtinger}).
\State 3) For $\tau \in [T]_0$, let $\mathbf{z}_{\tau + 1} = \mathbf{z}_\tau - \dfrac{\mu_{\tau +1}}{\|\mathbf{z}_0\|^2} \nabla f(\mathbf{z}_\tau)$ where\\ \begin{center}
$f(\mathbf{z}) \coloneqq \dfrac{1}{KL} \sum_{n = 1}^{KL} \left(\left\lvert \left(S_{-\lfloor\frac{n}{L}\rfloor}\widecheck{\mathbf{m}}^{(p,m)}_{n  ~{\rm mod}~ L} \right)^*\mathbf{z} \right\rvert^2  - y_n \right)^2$. \end{center}
\State 4) Return $\mathbf{x}_{\text{est}} = \mathbf{z}_{T}$.
\end{algorithmic}
\end{algorithm}
\footnotetext{For our numerical simulations in Section~\ref{sec: Numerics}, we set $\mu_\tau = \min(1 - e^{-\tau/330},0.4)$ as suggested in \cite{candes2015phasewirtinger}.}

\section{Numerical Simulations} \label{sec: Numerics}

In this section, we evaluate Algorithms~\ref{NFP-BlockPR} and~\ref{NFP Wirtinger Flow} with respect to both noise robustness and runtime.  Every data point in the plots below reports an average reconstruction error or runtime over 100 tests.  For each test, a new sample ${\bf x} \in \mathbb{C}^d$ is randomly generated by choosing each entry to have independent and identically distributed (i.i.d.) mean 0 and variance 1 Gaussian real and imaginary parts.  We then attempt to recover this sample  from the noisy measurements $Y_{k,\ell} \left( {\bf x} \right)$ defined as in \eqref{eqn: noisy near field} where the additive noise matrices $\mathbf{N}$ also have i.i.d. mean 0 Gaussian entries.

In our noise robustness experiments, we plot the reconstruction error as a function of the Signal-to-Noise  Ratio (SNR), where we define the reconstruction error by $${\rm Error}({\bf x}, \mathbf{x}_{\text{est}}) := 10\log_{10} \bigg( \frac{\min_{\phi}\|\mathbf{x} - e^{\mathbbm{i}\phi}\mathbf{x}_{\text{est}}\|_{2}^{2}}{\|\mathbf{x}\|_{2}^{2}}\bigg),$$
and the  SNR 
 by $${\rm SNR}(\mathbf{Y},\mathbf{N}) := 10\log_{10} \bigg(\frac{\|\mathbf{Y} - \mathbf{N}\|_{F}}{\|\mathbf{N}\|_{F}}\bigg).$$ In these experiments,  we re-scale the noise matrix $\mathbf{N}$ in order to achieve each desired SNR level.
All simulations were performed using MATLAB R2021b on an Intel desktop with a 2.60GHz i7-10750H CPU and 16GB DDR4 2933MHz memory. All code used to generate the figures below is publicly available at \url{https://github.com/MarkPhilipRoach/NearFieldPtychography}.

\subsection{Algorithms \ref{NFP-BlockPR} and \ref{NFP Wirtinger Flow} for Locally Supported Masks and Periodic Point Spread Functions}\label{sec:lem2Measurements}

In these experiments, we choose the  measurement index set for \eqref{eqn: noisy near field} to be $\mathcal{S} = \mathcal{K} \times \mathcal{L}$ where $\mathcal{K}=[d]_0$ and $\mathcal{L}=[2\delta-1]_0$.  As a consequence we see that we consider all shifts $k \in [d]_0$ of the mask while observing only a portion of each resulting noisy near-field diffraction pattern $\lvert \mathbf{p} * (S_k \mathbf{m} \circ \mathbf{x}) \rvert^2$ for each $k$.  This corresponds to a physical imaging system where, e.g., the sample and (a smaller) detector are fixed while a localized probe with support size $\delta$ scans across the sample.  Figure~\ref{Fig:Alg1lem2masks} evaluates the robustness and runtime of Algorithm~\ref{NFP-BlockPR} as a function of the SNR and mask support $\delta$ in this setting.  Looking at Figure~\ref{Fig:Alg1lem2masks} one can see that noise robustness increases with the support size of the mask, $\delta$, in exchange for mild increases in runtime.

\begin{figure}[H] 
\centering
\includegraphics[width=1\textwidth]{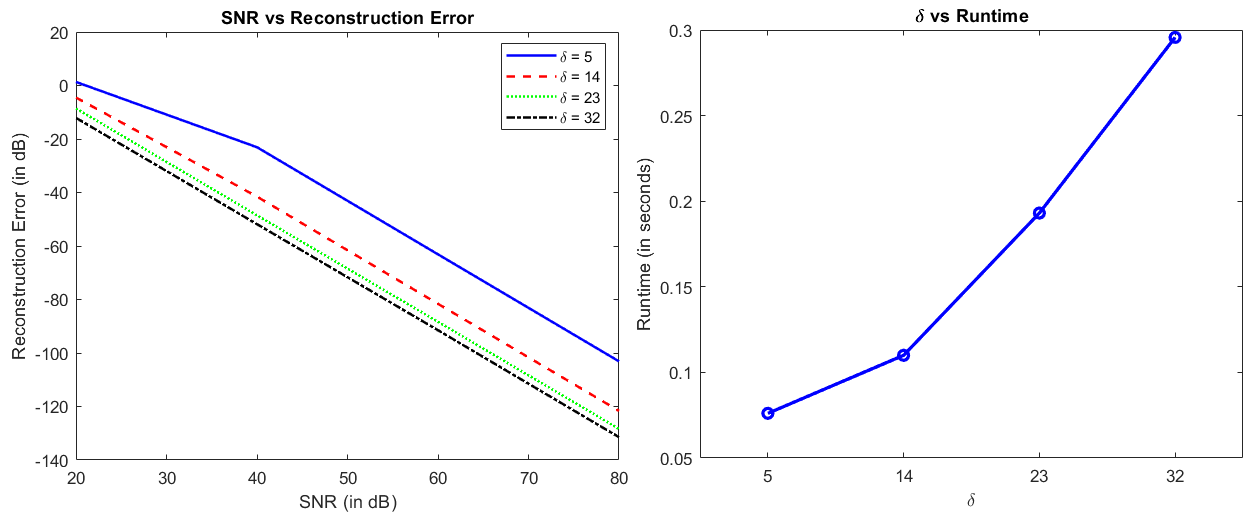}
\caption{An evaluation of Algorithm \ref{NFP-BlockPR} for the proposed PSF and mask with $d = 945$.  Left: Reconstruction error vs SNR for various $\delta = \lvert{\rm supp}({\bf m})\rvert$.  Right:  Runtime as a function of $\delta$.} \label{Fig:Alg1lem2masks}
\end{figure}

Figure~\ref{Fig:Algcomplem2mask} compares the performance of Algorithm \ref{NFP-BlockPR} and Algorithm \ref{NFP Wirtinger Flow} for the measurements proposed in Lemma~\ref{lem: Choice of p and m}.  Looking at Figure~\ref{Fig:Algcomplem2mask} we can see that Algorithm \ref{NFP Wirtinger Flow} takes longer to achieve comparable errors to Algorithm \ref{NFP-BlockPR} for these particular $\mathbf{p}$ and $\mathbf{m}$ as SNR increases. More specifically, we see, e.g., that BlockPR achieves a similar reconstruction error to 500 iterations of Wirtinger flow at an SNR of about $50$ in a small fraction of the time.  This supports the value of the BlockPR method as a fast initializer for more traditional optimization-based solution approaches.

\begin{figure}[H] 
\centering
\includegraphics[width=1\textwidth]{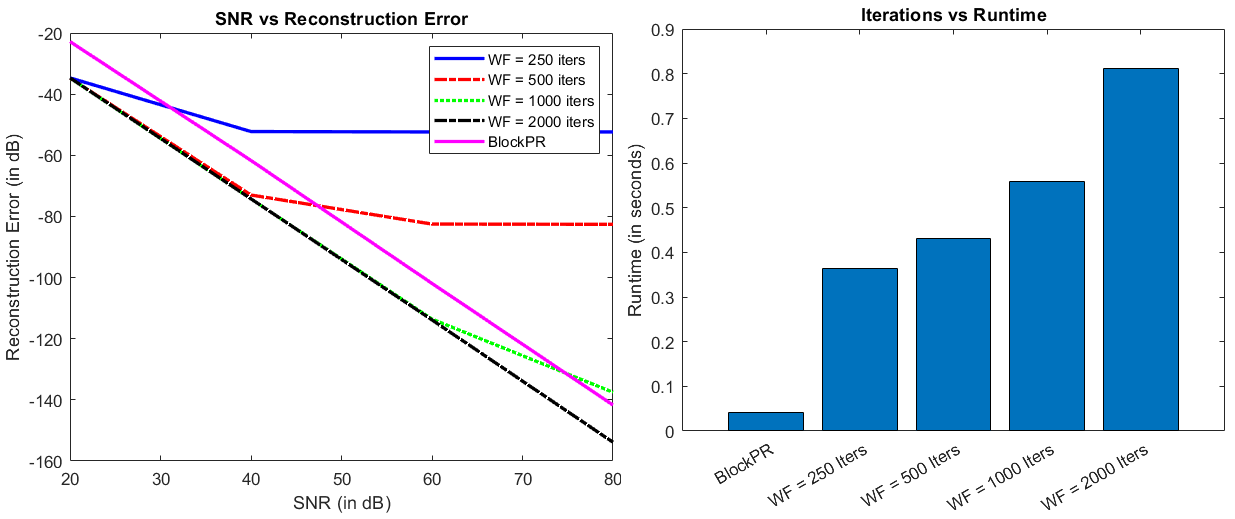}
\caption{A comparison of Algorithms \ref{NFP-BlockPR}~and~\ref{NFP Wirtinger Flow} for the proposed PSF and mask with $\delta = 26$ and $d = 102$.  Left: Reconstruction error vs SNR for various numbers of Algorithm~\ref{NFP Wirtinger Flow} iterations.  Right: The corresponding average runtimes.} 
\label{Fig:Algcomplem2mask}
\end{figure}

\subsection{Algorithm \ref{NFP Wirtinger Flow} for Globally Supported Masks}
\label{sec:GZexperiments}

As we saw in the previous section, Algorithm \ref{NFP-BlockPR} is able to invert NFP measurements more efficiently than Algorithm \ref{NFP Wirtinger Flow} in situations where it is applicable. However, Algorithm \ref{NFP-BlockPR} only applies to locally supported masks. In this section, we will show that Algorithm \ref{NFP Wirtinger Flow} remains effective even when the masks are globally supported, such as the masks considered in \cite{zhang2019near}.

In Figure~\ref{fig:Alg2wlessshifts}, we evaluate Algorithm~\ref{NFP Wirtinger Flow} using noisy measurements of the form
\begin{align} \label{equ:MeasAlg2experiments}
Y_{k,\ell} = \lvert (\mathbf{p} * (S_k \mathbf{m} \circ \mathbf{x}))_\ell\rvert^2 + N_{k,l}, ~~ (k,\ell) \in [K]_0 \times [d]_0.
\end{align}
Here $\mathbf{p} \in \mathbb{C}^d$ is a low-pass filter with $\widehat{\mathbf{p}} = S_{-(\gamma-1)/2}\mathbbm{1}_{\gamma}$ where $\gamma = \tfrac{d}{3}+1$ and $\mathbbm{1}_{\gamma} \in \{ 0,1 \}^d$ is a vector whose first $\gamma$ entries are $1$ and whose last $d-\gamma$ entries are $0$.  Here, we choose the  mask $\mathbf{m}$ to have i.i.d. mean 0 variance 1 Gaussian entries.  Thus, the measurements considered in  \eqref{equ:MeasAlg2experiments} differ from those used in Section~\ref{sec:lem2Measurements} in two crucial respects: i) the mask ${\bf m}$ here has global support.  ii) we utilize a small number of mask shifts and observe the entire diffraction pattern resulting from each one (as opposed to observing just a portion of each diffraction pattern from all possible shifts, as above).  Examining Figure~\ref{fig:Alg2wlessshifts}, one can see Algorithm \ref{NFP Wirtinger Flow} remains effective in this setting. We also observe, as expected, that using more shifts, i.e., collecting more measurements, results in lower reconstruction errors.

\begin{figure}[H] 
\centering
\includegraphics[width=1\textwidth]{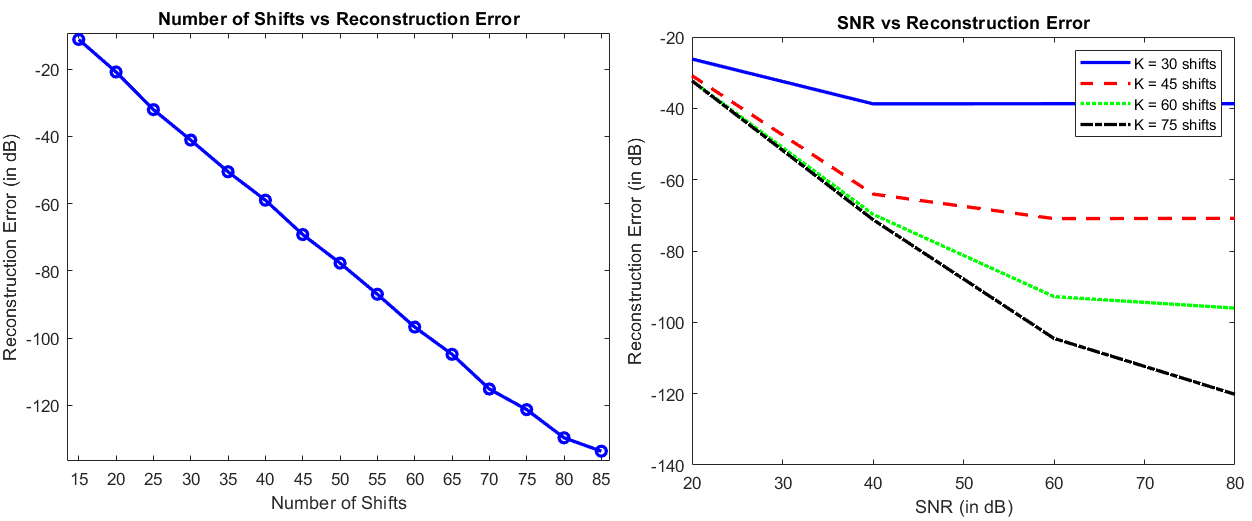}
\caption{The reconstruction error of Algorithm~\ref{NFP Wirtinger Flow} with $d = 102$, $\mathcal{L} = [d]_0$, and number of iterations $T = 2000$.  Left:  Reconstruction error vs the number of total shifts $K$ for fixed ${\rm SNR} = 80$.  Right:  Reconstruction error vs SNR for various numbers of shifts $K$.}  \label{fig:Alg2wlessshifts} 
\end{figure}

\subsection{Algorithm~\ref{NFP-BlockPR} for Non-Periodic PSFs via Remark~\ref{rmk: full rewritten}}

In these experiments, we choose the measurement index set for \eqref{eqn: noisy near field} to be $\mathcal{S}'$ from Remark~\ref{rmk: full rewritten} and consider two different non-periodic PSFs together with locally supported masks ${\bf m} \in \mathbb{C}^d$ having ${\rm supp}(\mathbf{m}) \subseteq [\delta]_0$ for $\delta = 26$ and $d = 102$.  Motivated again by \cite{zhang2019near}, we first consider a PSF given by a low-pass filter (defined as in Section \ref{sec:GZexperiments}) plus small noise modeling imperfections (here the additive vector has i.i.d. $\mathcal{N}(0,10^{-4})$ normal entries) in combination with a random symmetric mask.  Here the mask's nonzero entries are created by reflecting $\delta/2 = 13$ random entries (chosen via i.i.d. mean 0 variance 1 Gaussians) across the middle of its support.  The reconstruction error of Algorithm~\ref{NFP-BlockPR}, as well as of Algorithm~\ref{NFP Wirtinger Flow} initialized with the output of Algorithm~\ref{NFP-BlockPR}, is plotted on the left in Figure~\ref{Fig:Algremark1} as a function of the NFP measurements' SNR for this PSF/mask pair.  

For our second non-periodic PSF and locally supported mask pair, we let the PSF be a vector with unit magnitude entries having i.i.d. uniformly random phases, and let our locally supported masks have $\delta$ nonzero i.i.d. mean $0$ variance $1$ Gaussian entries.  The reconstruction errors of both Algorithm~\ref{NFP-BlockPR} and Algorithm~\ref{NFP Wirtinger Flow} are plotted on the right in Figure~\ref{Fig:Algremark1} as a function of the NFP measurements' SNR in this case.  In both experiments plotted in Figure~\ref{Fig:Algremark1}, we note that both random initialization as well as the spectral initialization method from \cite{candes2015phasewirtinger} appear insufficiently accurate to allow Algorithm~\ref{NFP Wirtinger Flow} to converge.  However, when the output of  Algorithm~\ref{NFP-BlockPR} is used to compute $z_0$ in step 2 of Algorithm~\ref{NFP Wirtinger Flow}, Algorithm~\ref{NFP Wirtinger Flow} then converges nicely to an accurate estimate of the true signal.  This further reinforces the potential value of Algorithm~\ref{NFP-BlockPR} as fast and accurate initializer for more traditional optimization-based solution approaches.

\begin{figure}[H] 
\centering
\includegraphics[width=1\textwidth]{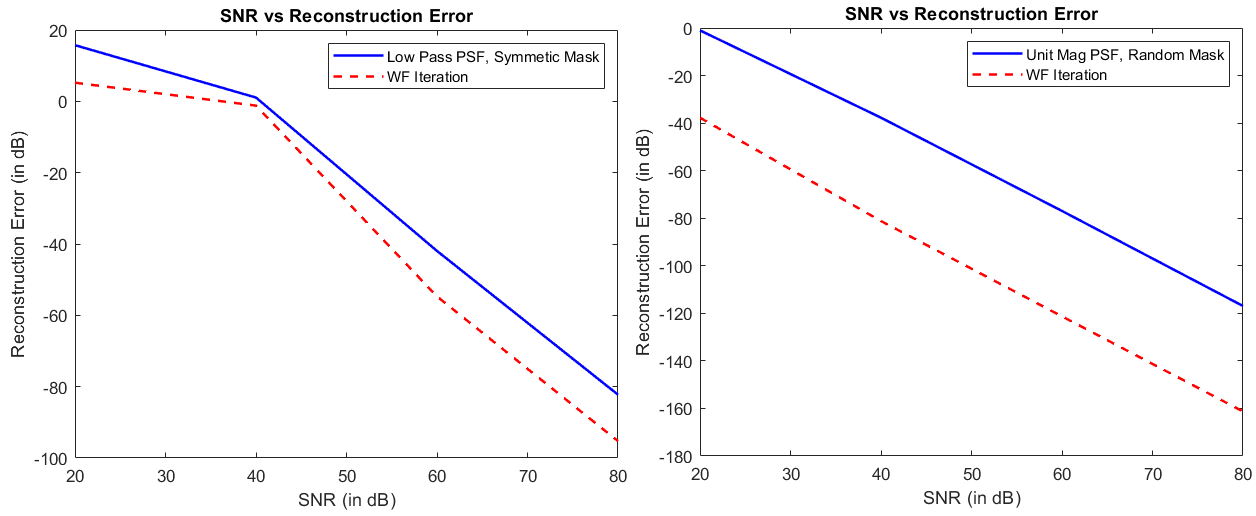}
\caption{A simulation applying Algorithm~\ref{NFP-BlockPR} via Remark~\ref{rmk: full rewritten} and then using its generated estimate as the initial estimate $z_0$ in Algorithm~\ref{NFP Wirtinger Flow}. In both plots Algorithm~\ref{NFP-BlockPR} is plotted in solid blue, and Algorithm~\ref{NFP Wirtinger Flow} is plotted in dashed red. Left: Reconstruction error vs SNR where the PSF is an low-pass filter with small additive noise, and the locally supported mask is symmetric and random. 
Here $T = 5000$ iterations are used in Algorithm~\ref{NFP Wirtinger Flow}.  Right: Reconstruction error vs SNR where the PSF is an vector with randomized unit magnitude entries, and the locally supported mask is random.  
Here $T = 1000$ iterations are used in Algorithm~\ref{NFP Wirtinger Flow}.} 
\label{Fig:Algremark1}
\end{figure}

\section{Conclusions and Future Work}
\label{sec:Conc}

We have introduced two new algorithms for recovering a specimen of interest from near-field ptychographic measurements. Both of these algorithms rely on first reformulating and reshaping our measurements so that they resemble widely-studied far-field ptychographic measurements. We then recover our method using either Wirtinger Flow or using methods based on \cite{IVW16}. Algorithm \ref{NFP-BlockPR} is computational efficient and, to the best of our knowledge, is the first algorithm with provable recovery guarantees for measurements of this form. Algorithm \ref{NFP Wirtinger Flow}, on the other hand, has the advantage of being applied to more general masks with global support. Developing more efficient and provably accurate algorithms for this latter class of measurements remains an interesting avenue for future work.


\section*{Acknowledgements}
The authors would like to thank Guoan Zheng for answering questions about (and providing code for) his work in \cite{zhang2019near}.  M. Roach's and M. Iwen's work on this project with both supported in part by NSF DMS 1912706.

\section*{Statements and Declarations}
On behalf of all authors, the corresponding author states that there is no conflict of interest.

\appendix
\section{Technical Lemmas}\label{secA}

We state the following lemmas for the sake of completeness.  Note that we index all vectors modulo $d$.
\begin{lemma}\label{Conjugation Identity} Let $\mathbf{x}, \mathbf{y} \in \mathbb{C}^{d}$. We have that $\lvert\langle \mathbf{x}, \overline{\mathbf{y}} \rangle\rvert^2 = \lvert\langle \overline{\mathbf{x}}, \mathbf{y} \rangle\rvert^2$.
\end{lemma}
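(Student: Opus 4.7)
The plan is to unfold both inner products directly from the definition in the notation table, namely $\langle \mathbf{u}, \mathbf{v}\rangle = \sum_{n=0}^{d-1} u_n \overline{v_n}$, and observe that the two quantities inside the absolute values are complex conjugates of one another. Since $\lvert z \rvert = \lvert \overline{z}\rvert$ for any $z \in \mathbb{C}$, squaring yields the claimed identity.

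More concretely, I would first write
\[
\langle \mathbf{x}, \overline{\mathbf{y}}\rangle = \sum_{n=0}^{d-1} x_n \,\overline{\overline{y_n}} = \sum_{n=0}^{d-1} x_n y_n,
\]
and then
\[
\langle \overline{\mathbf{x}}, \mathbf{y}\rangle = \sum_{n=0}^{d-1} \overline{x_n}\, \overline{y_n} = \overline{\sum_{n=0}^{d-1} x_n y_n}.
\]
Thus $\langle \overline{\mathbf{x}}, \mathbf{y}\rangle = \overline{\langle \mathbf{x}, \overline{\mathbf{y}}\rangle}$, from which the modulus (and hence the squared modulus) equality is immediate.

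There is really no obstacle here: the lemma is a one-line consequence of the conjugate-linearity of the inner product in its second argument together with the involutive property $\overline{\overline{z}} = z$. The only thing to be careful about is applying the paper's convention (from Table~\ref{Notational Reference Table}) that the inner product is conjugate-linear in the second slot rather than the first; writing out the sum explicitly, as above, makes this unambiguous.
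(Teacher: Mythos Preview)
Your proof is correct. The paper actually states this lemma without proof (it is listed among basic technical facts ``for the sake of completeness''), so there is nothing to compare against; your direct computation showing $\langle \overline{\mathbf{x}}, \mathbf{y}\rangle = \overline{\langle \mathbf{x}, \overline{\mathbf{y}}\rangle}$ is exactly the intended one-line justification.
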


\begin{lemma} \label{Lemma 2} Let $\mathbf{x}, \mathbf{y} \in \mathbb{C}^{d}$. We have that $\langle \mathbf{x}, \mathbf{y} \circ \mathbf{z} \rangle = \langle \mathbf{x} \circ \overline{\mathbf{y}}, \mathbf{z} \rangle$.
\end{lemma}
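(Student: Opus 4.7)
The plan is to verify this identity by a direct computation from the definitions in the Notational Reference Table. The complex inner product is given by $\langle \mathbf{a}, \mathbf{b} \rangle = \sum_{n=0}^{d-1} a_n \overline{b_n}$ and the Hadamard product by $(\mathbf{a} \circ \mathbf{b})_n = a_n b_n$. The only non-definitional ingredient needed is the multiplicativity of complex conjugation, namely $\overline{uv} = \overline{u}\,\overline{v}$ for $u,v \in \mathbb{C}$.

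First I would expand the left-hand side entrywise:
\begin{align*}
\langle \mathbf{x}, \mathbf{y} \circ \mathbf{z} \rangle
= \sum_{n=0}^{d-1} x_n\, \overline{(\mathbf{y} \circ \mathbf{z})_n}
= \sum_{n=0}^{d-1} x_n\, \overline{y_n z_n}.
\end{align*}
Then I would apply multiplicativity of conjugation to split $\overline{y_n z_n} = \overline{y_n}\,\overline{z_n}$, regroup the factors as $(x_n \overline{y_n})\,\overline{z_n}$, and recognize $x_n \overline{y_n} = (\mathbf{x} \circ \overline{\mathbf{y}})_n$. This yields
\begin{align*}
\sum_{n=0}^{d-1} (x_n \overline{y_n})\, \overline{z_n}
= \sum_{n=0}^{d-1} (\mathbf{x} \circ \overline{\mathbf{y}})_n\, \overline{z_n}
= \langle \mathbf{x} \circ \overline{\mathbf{y}}, \mathbf{z} \rangle,
\end{align*}
which is the right-hand side, completing the identity.

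There is no real obstacle here; the statement is essentially a rearrangement of a single sum and the entire argument is a three-line chain of equalities. The one subtlety worth flagging is ensuring the conjugation lands on the correct factor: since the inner product conjugates its \emph{second} argument, it is $\mathbf{y}$ (not $\mathbf{x}$) that becomes $\overline{\mathbf{y}}$ when it migrates from the second slot into the first, and this is consistent with how the lemma is used in the proof of Lemma~\ref{lem: rewrite}, where a factor is moved between slots of $\langle \cdot, \cdot \rangle$ against the conjugated $\overline{\mathbf{x}}$.
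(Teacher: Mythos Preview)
Your proof is correct and follows essentially the same approach as the paper: a direct entrywise expansion of the inner product, splitting $\overline{y_n z_n} = \overline{y_n}\,\overline{z_n}$, and regrouping to recognize the right-hand side. The paper's proof is the identical chain of equalities compressed into a single line.
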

\begin{proof} By the definition of the inner product and the Hadamard product
\begin{align*}
\langle \mathbf{x}, \mathbf{y} \circ \mathbf{z} \rangle &=  \sum_{n=0}^{d-1} x_n \overline{(\mathbf{y} \circ \mathbf{z})_n} =  \sum_{n=0}^{d-1} x_n \overline{y}_n \overline{z}_n  = \sum_{n=0}^{d-1} (\mathbf{x} \circ \overline{\mathbf{y}})_n \overline{z}_n = \langle \mathbf{x} \circ \overline{\mathbf{y}}, \mathbf{z} \rangle.
\end{align*}
\end{proof}
\begin{lemma} \label{Shift Identities} Let $\mathbf{x}, \mathbf{y} \in \mathbb{C}^{d}, k \in \mathbb{Z}$. We have that 
 \begin{enumerate}
\item \label{Shift Lemma 1} $(\mathbf{x} * \mathbf{y})_k = \langle S_{-k} \widetilde{\mathbf{x}}, \overline{\mathbf{y}} \rangle$;
\item \label{Shift Lemma 2} $\mathbf{x} \circ S_k\mathbf{y} = S_k(S_{-k}\mathbf{x} \circ \mathbf{y})$;
\item \label{Shift Lemma 3} $\langle S_k \mathbf{x}, \mathbf{y} \rangle = \langle \mathbf{x}, S_{-k} \mathbf{y} \rangle$.
\end{enumerate}
\end{lemma}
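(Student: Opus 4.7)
The three identities collected in Lemma~\ref{Shift Identities} are all standard consequences of the circular-indexing conventions laid out in Table~\ref{Notational Reference Table}, and the plan is simply to verify each by unfolding definitions and, where appropriate, performing one change of summation variable modulo $d$. No auxiliary results are required, and I would treat the three parts separately in the order stated.

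For part~\ref{Shift Lemma 1}, I would expand $\langle S_{-k}\widetilde{\mathbf{x}},\overline{\mathbf{y}}\rangle = \sum_{n=0}^{d-1} (S_{-k}\widetilde{\mathbf{x}})_n\, y_n$ using $\overline{\overline{y_n}}=y_n$, apply the definitions of the shift and the reversal to rewrite $(S_{-k}\widetilde{\mathbf{x}})_n = \widetilde{x}_{n-k} = x_{k-n}$, and then reindex via $m = k-n \pmod d$ to recover $\sum_{m=0}^{d-1} x_m y_{k-m} = (\mathbf{x}*\mathbf{y})_k$ from the definition of circular convolution.

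Part~\ref{Shift Lemma 2} is an entry-wise check: for each $n\in[d]_0$, both $(\mathbf{x}\circ S_k\mathbf{y})_n$ and $\bigl(S_k(S_{-k}\mathbf{x}\circ\mathbf{y})\bigr)_n$ reduce to $x_n\,y_{n+k}$, since $(S_{-k}\mathbf{x})_{n+k} = x_n$. For part~\ref{Shift Lemma 3}, I would expand $\langle S_k\mathbf{x},\mathbf{y}\rangle = \sum_n x_{n+k}\overline{y_n}$ and substitute $m = n+k \pmod d$ to obtain $\sum_m x_m \overline{y_{m-k}} = \sum_m x_m \overline{(S_{-k}\mathbf{y})_m} = \langle\mathbf{x},S_{-k}\mathbf{y}\rangle$.

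The only real obstacle, if one can call it that, is keeping every index interpreted modulo $d$, so that reindexing a sum over the complete residue system $[d]_0$ is transparent and no boundary terms intervene. Once that convention is fixed, each of the three identities collapses to a one-line computation, and the lemma follows.
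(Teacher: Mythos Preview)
Your proposal is correct and follows essentially the same approach as the paper: each part is established by unfolding the definitions of shift, reversal, Hadamard product, inner product, and convolution, together with a single change of summation index modulo $d$ where needed. The only cosmetic difference is that for part~\ref{Shift Lemma 1} the paper begins from the convolution side and rewrites $x_{k-n}$ as $(S_{-k}\widetilde{\mathbf{x}})_n$ directly (without your final reindexing step), but this is the same computation traversed in the opposite direction.
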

\begin{proof} Proof of \ref{Shift Lemma 1}: Let $\mathbf{x}, \mathbf{y} \in \mathbb{C}^{d}$. By the definition of the circular convolution
\begin{align*}
(\mathbf{x} * \mathbf{y})_k &= \sum_{n=0}^{d-1} x_{k-n}y_n = \sum_{n=0}^{d-1} x_{k-n} \overline{\overline{y_n}} = \sum_{n=0}^{d-1} (S_{-k}\widetilde{\mathbf{x}})_n \overline{\overline{y_n}} = \langle S_{-k} \widetilde{\mathbf{x}}, \overline{\mathbf{y}} \rangle.&
\end{align*}
Proof of \ref{Shift Lemma 2}: Let $\mathbf{x}, \mathbf{y} \in \mathbb{C}^d, k \in \mathbb{Z}$. Let $n \in [d]_0$ be arbitrary.  Then we have that
\begin{align*}
(\mathbf{x} \circ S_k\mathbf{y})_n = \mathbf{x}_n (S_k\mathbf{y})_n = (S_{-k}\mathbf{x})_{n+k} \mathbf{y}_{n+k}  = (S_k (S_{-k}\mathbf{x} \circ \mathbf{y}))_n.
\end{align*}
Proof of \ref{Shift Lemma 3}: Noting that we index modulo $d,$ we have that
\begin{align*}
\langle S_k \mathbf{x}, \mathbf{y} \rangle = \sum_{n=0}^{d-1} (S_k \mathbf{x})_n \overline{y_n} = \sum_{n=0}^{d-1} x_{n+k} \overline{y_n} = \sum_{n=k}^{d+k-1} x_{n} \overline{y_{n-k}} = \sum_{n=0}^{d-1} \mathbf{x}_{n} \overline{y_{n-k}}=\langle\mathbf{x},S_{-k}\mathbf{y}\rangle.
\end{align*}
\end{proof}
We now give our proof of Lemma \ref{lem: Full Error Bound}.
\begin{proof}[Proof of Lemma \ref{lem: Full Error Bound}] Fix $\phi \in [0,2\pi)$.  By the triangle inequality we have

\begin{align}\nonumber
 \|\mathbf{x} - e^{\mathbbm{i}\phi}\mathbf{x}_{\text{est}}\|_{2} =& \| \mathbf{x}^{(\text{mag})} \circ \mathbf{x}^{(\theta)} - \mathbf{x}_{\text{est}}^{(\text{mag})} \circ e^{i \phi} \mathbf{x}_{\text{est}}^{(\theta)}\|_{2}&\\\nonumber
\leq&~ \| \mathbf{x}^{(\text{mag})} \circ \mathbf{x}^{(\theta)} -  \mathbf{x}^{(\text{mag})} \circ e^{\mathbbm{i}\phi}\mathbf{x}_{\text{est}}^{(\theta)}\|_{2}\\ &\qquad+  \|\mathbf{x}^{(\text{mag})} \circ e^{\mathbbm{i}\phi}\mathbf{x}_{\text{est}}^{(\theta)} - \mathbf{x}_{\text{est}}^{(\text{mag})} \circ e^{i \phi} \mathbf{x}_{\text{est}}^{(\theta)}\|_{2}.\label{eqn: split up into phase and magnitude}
\end{align}
For the first term, we may use the inequality $\|\mathbf{u}\circ \mathbf{v}\|_2\leq \|\mathbf{u}\|_\infty \|\mathbf{v}\|_2$ to see that
\begin{align}\nonumber
\| \mathbf{x}^{(\text{mag})} \circ \mathbf{x}^{(\theta)} -  \mathbf{x}^{(\text{mag})} \circ e^{\mathbbm{i}\phi}\mathbf{x}_{\text{est}}^{(\theta)}\|_{2}
&\leq \|\mathbf{x}^{(\text{mag})}\|_{\infty} \|\mathbf{x}^{(\theta)}_{\text{est}} - e^{-\mathbbm{i}\phi}\mathbf{x}^{(\theta)}\|_{2}&\\\label{eqn: phase error bound}
&= \|\mathbf{x}\|_{\infty}  \|\mathbf{x}^{(\theta)}_{\text{est}} - e^{-\mathbbm{i}\phi}\mathbf{x}^{(\theta)}\|_{2}.
\end{align} For the second term, we see that
\begin{align}\nonumber
\|\mathbf{x}^{(\text{mag})} \circ e^{\mathbbm{i}\phi}\mathbf{x}_{\text{est}}^{(\theta)} - \mathbf{x}_{\text{est}}^{(\text{mag})} \circ e^{i \phi} \mathbf{x}_{\text{est}}^{(\theta)}\|_{2} 
  &\leq  \| e^{\mathbbm{i}\phi}\mathbf{x}_{\text{est}}^{(\theta)}\|_\infty \cdot \|\mathbf{x}^{(\text{mag})}  - \mathbf{x}_{\text{est}}^{(\text{mag})}\|_{2}&\\
&= \| \mathbf{x}^{(\text{mag})} -  \mathbf{x}^{(\text{mag})}_{\text{est}}\|_{2}\label{eqn: mag error bound}.
\end{align}
Combining \eqref{eqn: phase error bound} and \eqref{eqn: mag error bound} with \eqref{eqn: split up into phase and magnitude} and minimizing over $\phi$ completes the proof. 
\end{proof}

\section{Auxilliary Results from Spectral Graph Theory}\label{secB}
In this section, we will prove several lemmas related to the graph Laplacian and its eigenvalues.
The following definition defines a partial ordering on the set of weighted graphs induced by the spectrum of their graph Laplacians.
\begin{definition} We say that a symmetric matrix  $\mathbf{A}$ is positive semi-definite and write $\mathbf{A} \succeq \mathbf{0}$ if  $\mathbf{x}^T \mathbf{A}\mathbf{x} \geq 0,  \forall \mathbf{x} \in \mathbb{R}^n$ (or equivalently if all the eigenvalues of $\mathbf{A}$ are non-negative). We define the \textit{Loewner order}\footnotemark \footnotetext{The Loewner order is actually a partial ordering since there exist $\mathbf{A}$ and  $\mathbf{B}$ such that $\mathbf{A} \not\succeq \mathbf{B}$ and $\mathbf{B} \not\succeq \mathbf{A}$. 
} $\succeq$ by the rule that $\mathbf{A}\succeq \mathbf{B}$ if $\mathbf{A}-\mathbf{B}$ is positive semi-definite (or equivalently if  
$\mathbf{x}^T \mathbf{A} \mathbf{x} \geq \mathbf{x}^T\mathbf{B}\mathbf{x}, \forall \mathbf{x} \in \mathbb{R}^n$).
For two graphs $G$ and $H$ with the same number of vertices, we will define $G\succeq H$ if $\mathbf{L}_G\succeq\mathbf{L}_H.$ We will also write $G\succeq \sum_{i=0}^{n-1} H_i$ if $\mathbf{L}_G\succeq \sum_{i=0}^{n-1} \mathbf{L}_{H_i}$, and for a scalar $c$ we will write $G\succeq cH$ if $\mathbf{L}_G\succeq c\mathbf{L}_H$. 
\end{definition}
\begin{remark}\label{rem: spectral gap order}
If $G\succeq H$ and $\tau_G$ and $\tau_H$ are the smallest non-zero eigenvalues $\mathbf{L}_G$ and $\mathbf{L}_H$, then one can use the fact that $\tau_G = \underset{\substack{\mathbf{x} \in \mathbb{R}^n\\ \mathbf{x} \bot \mathbf{1}}}{\min} \dfrac{\mathbf{x}^T\mathbf{L}_G\mathbf{x}}{\mathbf{x}^T\mathbf{x}}$ (see \cite{sagt}) to verify that $\tau_G\geq \tau_H$. 
\end{remark}


We now define some basic terminology for weighted graphs. (We note that these definitions may also be applied to unweighted graphs by interpreting each edge as having weight one.)
\begin{definition} \label{def: basics weighted} \textbf{(Weighted Distance Definitions)} Let $G = (V,E,\mathbf{W})$ be a weighted graph.\\
(i) For any subgraph $H = (V',E')$ of $G$, we define the \textbf{weight} of $H$, denoted $w(H)$, as 
\begin{align*}
w(H) \coloneqq \sum_{(i,j)\in E'} W_{i,j},
\end{align*}  
(ii) If $P$ is a path inside $G$, we will let $\text{len}(P) \coloneqq w(P)$ denote the \textbf{weighted length} of $P$.\\
(iii) We define the \textbf{weighted distance} between two vertices $u$ and $v$,  $\text{dist}_G (u,v)$, to be the minimal weighted length of any path from $u$ to $v$\\
(iv) The \textbf{weighted diameter} of $G$, denoted by $\text{diam}(G)$, is the maximum distance between any two vertices in $G$, that is,
\begin{align*}
\text{diam}(G) \coloneqq \max\{\text{dist}_G (u,v) \mid (u,v) \in V \times V\}.
\end{align*}
\end{definition}
\noindent In some contexts, it will be useful to consider the pointwise inverses of the weights $W_{i,j}$.
\begin{definition} \textbf{(Inverse Weighted Distance Definitions)} Let $G = (V,E,\mathbf{W})$ be a weighted graph.\\
(i) For any subgraph  $H = (V',E')$ of $G$, the \textbf{inverse weight} of $H$, 
is defined by 
\begin{align*}
w^{-1}(H) \coloneqq \sum_{(i,j)\in E'} \dfrac{1}{W_{i,j}},
\end{align*}
(ii) For a path $P$ inside $G$, we refer to $\text{len}^{-1}(P) \coloneqq w^{-1}(P)$ as the \textbf{inverted weighted length} of $P$.\\
(iii) For two vertices $u$ and $v$ we will refer to the minimal value of $w^{-1}(P)$ over all paths from $u$ to $v$ as the \textbf{inverted weighted distance}, denoted by $\text{dist}^{-1}_{G} (u,v)$.\\
(iv) The \textbf{inverse weighted diameter} of $G$, denoted by $\text{diam}^{-1}(G)$, is the maximum distance between any two vertices in $G$, that is,
\begin{align*}
\text{diam}^{-1}(G) \coloneqq \max\{\text{dist}^{-1}_G (u,v) \mid (u,v) \in V \times V\}.
\end{align*}
\end{definition}

The proof of Lemma \ref{lem: spectral gap} (and thus Theorem \ref{thm:MainThm}), relies on the following  lemma to provide a lower bound for the spectral gap $\tau_{G}$. 

\begin{lemma}\label{Weighted Spectral Bound} (\textbf{Weighted Spectral Bound}) Let $G = (V,E, \mathbf{W})$ be a weighted, connected graph with $\vert V\vert = n$, and let $W_{\min}$ and $W_{\text{max}}$ denote the minimum and maximum value of any of the (nonzero) weights of $G$.  Then 
\begin{align*}
\tau_G \geq \dfrac{2 \cdot W_{\min}}{W_{\text{max}}(n-1) \cdot \text{diam}^{-1}(G)}.
\end{align*}
\end{lemma}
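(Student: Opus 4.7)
The plan is to reduce the weighted problem to an unweighted diameter bound and then translate the unweighted diameter back to $\text{diam}^{-1}(G)$. First, I would introduce $G_{\text{unw}}$, the graph with the same edge set but all weights set to $1$. For every $\mathbf{x} \in \mathbb{R}^n$,
$$\mathbf{x}^T\mathbf{L}_G\mathbf{x} \;=\; \sum_{(i,j)\in E} W_{i,j}(x_i-x_j)^2 \;\geq\; W_{\min}\sum_{(i,j)\in E}(x_i-x_j)^2 \;=\; W_{\min}\,\mathbf{x}^T\mathbf{L}_{G_{\text{unw}}}\mathbf{x},$$
so $\mathbf{L}_G \succeq W_{\min}\,\mathbf{L}_{G_{\text{unw}}}$. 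By the Rayleigh-quotient characterisation of the spectral gap quoted in Remark \ref{rem: spectral gap order}, this immediately yields $\tau_G \geq W_{\min}\,\tau_{G_{\text{unw}}}$. Hence it suffices to establish the classical unweighted bound $\tau_{G_{\text{unw}}} \geq 2/\bigl((n-1)\,\text{diam}(G_{\text{unw}})\bigr)$ and then to compare the two notions of diameter.

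For the unweighted bound, I would fix, for each unordered pair $\{u,v\} \subseteq V$, a shortest path $P_{u,v}$ in $G_{\text{unw}}$; by definition $|P_{u,v}| \leq \text{diam}(G_{\text{unw}})$. Applying Cauchy-Schwarz to the telescoping identity $x_u - x_v = \sum_{(a,b)\in P_{u,v}}(x_a-x_b)$ gives
$$(x_u - x_v)^2 \;\leq\; |P_{u,v}|\sum_{(a,b)\in P_{u,v}}(x_a-x_b)^2 \;\leq\; \text{diam}(G_{\text{unw}})\sum_{(a,b)\in P_{u,v}}(x_a-x_b)^2.$$
Summing over all $\binom{n}{2}$ pairs and bounding the number of chosen paths $P_{u,v}$ that pass through any fixed edge by the trivial bound $\binom{n}{2}$ yields
$$\sum_{u<v}(x_u-x_v)^2 \;\leq\; \binom{n}{2}\,\text{diam}(G_{\text{unw}})\,\mathbf{x}^T\mathbf{L}_{G_{\text{unw}}}\mathbf{x}.$$
For $\mathbf{x}\perp\mathbf{1}$ the left-hand side equals $n\|\mathbf{x}\|^2$, and taking the infimum over such $\mathbf{x}$ produces exactly $\tau_{G_{\text{unw}}} \geq 2/\bigl((n-1)\,\text{diam}(G_{\text{unw}})\bigr)$.

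Finally, since every edge in any path $P$ contributes at least $1/W_{\max}$ to its inverse weighted length, the hop count obeys $|P| \leq W_{\max}\,\text{len}^{-1}(P)$; minimising over paths between a fixed pair of vertices and then maximising over pairs gives $\text{diam}(G_{\text{unw}}) \leq W_{\max}\,\text{diam}^{-1}(G)$. Chaining the three inequalities produces the claimed bound. The only lossy step is the crude double-count asserting that each edge lies in at most $\binom{n}{2}$ of the chosen paths: this is what I expect to be the main obstacle in the sense of wasted constants, since a smarter routing argument (e.g.\ a balanced flow spreading each pair's mass across multiple paths) would sharpen the factor. However, the naive routing already reproduces exactly the $(n-1)$ and the ratio $W_{\min}/W_{\max}$ that appear in the stated bound, so I would not try to optimise it here.
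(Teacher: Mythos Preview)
Your proof is correct. The route, however, differs from the paper's. The paper stays in the weighted setting throughout: it invokes the weighted path inequality (Lemma~\ref{Weighted Path Inequality}) to get $G_{u,v}\preccurlyeq \text{diam}^{-1}(G)\cdot G$ for every unordered pair, sums these inequalities to compare $G$ with a weighted complete graph $\widetilde{K}_n$ (edge weights $W_{i,j}$ on $E$ and $W_{\min}$ off $E$), and then lower-bounds $\tau_{\widetilde{K}_n}$ by $W_{\min}\cdot\tau_{K_n}=W_{\min}\cdot n$. You instead strip off the weights immediately via $\mathbf{L}_G\succeq W_{\min}\mathbf{L}_{G_{\text{unw}}}$, prove the classical unweighted diameter bound by the Cauchy--Schwarz routing argument (which is exactly the unweighted special case of Lemma~\ref{Weighted Path Inequality}), and only at the end translate $\text{diam}(G_{\text{unw}})$ back to $\text{diam}^{-1}(G)$ via $\text{diam}(G_{\text{unw}})\leq W_{\max}\,\text{diam}^{-1}(G)$. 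In effect you prove the content of Theorem~\ref{General Weighted Spectral Gap} first and deduce Lemma~\ref{Weighted Spectral Bound} from it, whereas the paper does the reverse. Your approach is arguably more elementary since it never needs the weighted path inequality; the paper's approach is more direct in that $\text{diam}^{-1}(G)$ appears from the outset without the detour through the unweighted diameter.
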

\noindent To prove Lemma \ref{Weighted Spectral Bound}, we recall the following lemma from \cite{sagt}. 

\begin{lemma}\label{Weighted Path Inequality} \textbf{(Weighted Path Inequality)} (Lemma 5.6.1 \cite{sagt}) Let $P_n=(v_0,v_1,\ldots,v_{n-1})$ be a path of length $n$ and assume that, for all $0\leq i < n-2$,  $w_i$, the weight of $(v_i,v_{i+1})$ is strictly positive. For $0\leq i <n-2,$ let $G_{i,i+1}=(V,(v_i,v_{i+1}))$ be the graph whose vertex set $V$ is that same as the vertex set of $G$ but only has a single edge $(v_i,v_{i+1})$. Similarly, let $G_{0,n-1} = (V,(v_0,v_{n-1}))$ be the graph with only a single edge $(v_0,v_{n-1})$.  Then
\begin{align*}
G_{0,n-1} \preccurlyeq \bigg( \sum_{i=0}^{n-2} \dfrac{1}{w_i}\bigg) \sum_{i=0}^{n-2}w_i G_{i,i+1}\footnotemark = \text{len}^{-1}({P_{n}}) \cdot P_{n},
\end{align*}
where the final equality is interpreted is the sense of  $A\preccurlyeq B$ and $B\preccurlyeq A$.
\footnotetext{Under this construction, we see that if we have a weight which is much larger than all of the others, it effectively gets nullified by taking the inverse.}
\end{lemma}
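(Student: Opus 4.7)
The plan is to reduce this Loewner-order inequality to a scalar quadratic inequality and then close the argument with Cauchy--Schwarz. I would begin by recalling that for any single-edge graph with vertex set $V$ and sole edge $(u,v)$ of unit weight, the associated Laplacian satisfies $\mathbf{x}^{T} \mathbf{L} \mathbf{x} = (x_u - x_v)^{2}$ for every $\mathbf{x} \in \mathbb{R}^{|V|}$. Consequently, the claimed Loewner inequality
$$G_{0,n-1} \preccurlyeq \left(\sum_{i=0}^{n-2} \frac{1}{w_i}\right)\sum_{i=0}^{n-2} w_i \, G_{i,i+1}$$
is equivalent, by definition of the Loewner order, to the pointwise inequality
$$(x_{v_0} - x_{v_{n-1}})^{2} \leq \left(\sum_{i=0}^{n-2} \frac{1}{w_i}\right)\sum_{i=0}^{n-2} w_i \, (x_{v_i} - x_{v_{i+1}})^{2}$$
holding for every $\mathbf{x} \in \mathbb{R}^{|V|}$.

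Next, I would telescope the difference along the path as $x_{v_0} - x_{v_{n-1}} = \sum_{i=0}^{n-2}(x_{v_i} - x_{v_{i+1}})$ and factor each summand as $\sqrt{w_i}\,(x_{v_i}-x_{v_{i+1}}) \cdot 1/\sqrt{w_i}$. The strict positivity of each $w_i$, which is part of the hypothesis, is exactly what makes the square roots and reciprocals meaningful. Applying the Cauchy--Schwarz inequality with $a_i := \sqrt{w_i}\,(x_{v_i}-x_{v_{i+1}})$ and $b_i := 1/\sqrt{w_i}$ then gives
$$(x_{v_0} - x_{v_{n-1}})^{2} = \left(\sum_{i=0}^{n-2} a_i b_i\right)^{\!2} \leq \left(\sum_{i=0}^{n-2} a_i^{2}\right)\!\left(\sum_{i=0}^{n-2} b_i^{2}\right),$$
which is exactly the pointwise inequality above.

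Finally, the claimed equality $\left(\sum_{i=0}^{n-2} 1/w_i\right)\sum_{i=0}^{n-2} w_i \, G_{i,i+1} = \operatorname{len}^{-1}(P_n)\cdot P_n$ is a pure notational identity, since by Definition~\ref{def: basics weighted} we have $\operatorname{len}^{-1}(P_n) = \sum_{i=0}^{n-2} 1/w_i$, and the weighted path graph $P_n$ is, in the Laplacian sense, the sum $\sum_{i=0}^{n-2} w_i\, G_{i,i+1}$. The hard part will essentially be cosmetic: keeping the Loewner-order notation, the weighted-sum-of-graphs notation, and the indices straight. The analytic content is a one-line Cauchy--Schwarz after telescoping, so I do not anticipate any substantive obstacle beyond bookkeeping.
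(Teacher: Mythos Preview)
Your argument is correct: the telescoping plus Cauchy--Schwarz reduction is exactly the standard proof of this inequality, and your bookkeeping with the Loewner order and the identification of $P_n$ with $\sum_{i=0}^{n-2} w_i\,G_{i,i+1}$ is sound. Note, however, that the paper does not actually supply a proof of this lemma; it is simply quoted as Lemma~5.6.1 of \cite{sagt} and used as a black box in the proof of Lemma~\ref{Weighted Spectral Bound}, so there is no in-paper argument to compare against --- your proof matches the one in the cited reference.
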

\begin{proof}[The Proof of Lemma \ref{Weighted Spectral Bound}] For $u,v\in V$, let $G_{u,v}=(V,(u,v))$ denote the graph with only a single edge from $u$ to $v$ and let $P_{u,v}$ denote a path from $u$ to $v$ with minimal weighted inverse length. Then, by Lemma \ref{Weighted Path Inequality} we have
\begin{align*}
G_{u,v} \preccurlyeq \text{len}^{-1}(P_{u,v}(G)) \cdot P_{u,v}(G) \preccurlyeq \text{diam}^{-1}(G) \cdot P_{u,v}(G) \preccurlyeq \text{diam}^{-1}(G) \cdot G,
\end{align*}
where the last inequality holds since for all subgraphs $H$ of a graph $G$, $H \preccurlyeq G$ (Section 5.2 \cite{sagt})

Let $\widetilde{K}_n$ be the extended weighted, complete graph on $n$ vertices with weighted matrix $\widetilde{\mathbf{W}}$, where $\widetilde{W}_{i,j} = 
\begin{cases}
W_{i,j}, &(i,j) \in E\cr
W_{\min}, &(i,j) \not\in E\cr
\end{cases}
$. Then by summing over all vertices, we have that
\begin{align*}
\mathbf{L}_{\widetilde{K}_n} = \sum_{0 \leq i < j \leq n-1} \widetilde{W}_{i,j}\mathbf{L}_{G_{i,j}} \preccurlyeq W_{\text{max}}\sum_{0 \leq i < j \leq n-1} \text{diam}^{-1}(G) \cdot \mathbf{L}_G,
\end{align*}
Since $\sum_{0\leq i<j\leq n-1} 1=n(n-1)$, 
we then have that
\begin{align*}
\widetilde{K}_n \preccurlyeq \dfrac{W_{\text{max}}n(n-1)}{2} \text{diam}^{-1}(G) \cdot G,
\end{align*} which, by Remark \eqref{rem: spectral gap order}  implies $$\tau_{\widetilde{K}_n} \leq  \dfrac{W_{\text{max}}n(n-1)}{2} \text{diam}^{-1}(G) \tau_G,$$
and therefore  $$\tau_G \geq \dfrac{2\tau_{\widetilde{K}_n}}{W_{\text{max}}n(n-1) \cdot \text{diam}^{-1}(G)}.$$
Letting $K_n$ be the unweighted graph on $n$ vertices we see that
\begin{align*}
\mathbf{x}^T \mathbf{L}_{\widetilde{K}_n} \mathbf{x} = \hspace{-3mm}\sum_{(a,b) \in [n]_0} \hspace{-3mm} \widetilde{W}_{a,b}(x(a) - x(b))^2 \geq W_{\text{min}} \hspace{-3mm}\sum_{(a,b) \in [n]_0} \hspace{-3mm} (x(a) - x(b))^2 = W_{\text{min}} \mathbf{x}^T\mathbf{L}_{K_n}\mathbf{x}.
\end{align*}
Therefore,
\begin{align*}
\tau_{\widetilde{K}_n} =  \underset{\substack{\mathbf{x} \in \mathbb{R}^n\\ \mathbf{x} \bot \mathbf{1}}}{\min} \dfrac{\mathbf{x}^T\mathbf{L}_{\widetilde{K}_n}\mathbf{x}}{\mathbf{x}^T\mathbf{x}} \geq W_{\text{min}} \underset{\substack{\mathbf{x} \in \mathbb{R}^n\\ \mathbf{x} \bot \mathbf{1}}}{\min} \dfrac{\mathbf{x}^T\mathbf{L}_{K_n}\mathbf{x}}{\mathbf{x}^T\mathbf{x}} \geq W_{\min} \cdot \tau_{K_n}.
\end{align*}
\text{Thus, since $\tau_{K_N} = n$ (5.4.1, \cite{sagt}), we have that $\tau_G \geq \dfrac{2W_{\text{min}}}{W_{\text{max}}(n-1) \cdot \text{diam}^{-1}(G)}$}.
\end{proof}
Our next result uses Lemma \ref{Weighted Spectral Bound} to produce a bound for $\tau_G$ in terms of the diameter of the underlying unweighted graph.
\begin{theorem} \label{General Weighted Spectral Gap} Let $G = (V,E,\mathbf{W})$ be a weighted graph and let $W_{\min}$ and $W_{\text{max}}$ be the minimum and maximum value of any its (nonzero) weights. Then 
\begin{align*}
\tau_{G}  \geq \dfrac{2 \cdot (W_{\text{min}})^2}{W_{\text{max}}(n-1)\text{diam}(G_{\text{unw}})},
\end{align*}
where $G_{\text{unw}}=(V,E)$ is the unweighted counterpart of $G$.
\end{theorem}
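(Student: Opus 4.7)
The plan is to reduce Theorem \ref{General Weighted Spectral Gap} to the already established Lemma \ref{Weighted Spectral Bound} by controlling the inverse weighted diameter $\text{diam}^{-1}(G)$ in terms of the unweighted diameter $\text{diam}(G_{\text{unw}})$. Specifically, Lemma \ref{Weighted Spectral Bound} gives
\begin{align*}
\tau_G \geq \dfrac{2 W_{\min}}{W_{\max}(n-1)\,\text{diam}^{-1}(G)},
\end{align*}
so it suffices to show the bound $\text{diam}^{-1}(G) \leq \text{diam}(G_{\text{unw}})/W_{\min}$, after which substitution yields exactly the claimed inequality.

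First I would fix an arbitrary pair of vertices $u,v \in V$ and let $P_{u,v}$ be a shortest (in the sense of fewest edges) path from $u$ to $v$ in the underlying unweighted graph $G_{\text{unw}}$; its edge count is at most $\text{diam}(G_{\text{unw}})$. Since $P_{u,v}$ is also a valid path in $G$ (the edge set is the same, only the weights differ), and since every edge weight along $P_{u,v}$ is at least $W_{\min}$, I would estimate
\begin{align*}
\text{dist}^{-1}_G(u,v) \;\leq\; \text{len}^{-1}(P_{u,v}) \;=\; \sum_{(i,j)\in P_{u,v}} \frac{1}{W_{i,j}} \;\leq\; \frac{|P_{u,v}|}{W_{\min}} \;\leq\; \frac{\text{diam}(G_{\text{unw}})}{W_{\min}}.
\end{align*}
Taking the maximum over all pairs $(u,v)$ then gives $\text{diam}^{-1}(G) \leq \text{diam}(G_{\text{unw}})/W_{\min}$.

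Finally, plugging this bound into Lemma \ref{Weighted Spectral Bound} yields
\begin{align*}
\tau_G \;\geq\; \dfrac{2 W_{\min}}{W_{\max}(n-1)\,\text{diam}^{-1}(G)} \;\geq\; \dfrac{2 W_{\min}^2}{W_{\max}(n-1)\,\text{diam}(G_{\text{unw}})},
\end{align*}
which is the desired inequality. There is no real obstacle here: the entire argument is a one-line comparison between weighted and unweighted path lengths, followed by invoking the previous lemma. The only minor subtlety worth flagging is implicit connectedness: Lemma \ref{Weighted Spectral Bound} requires $G$ to be connected, and the bound $\text{diam}(G_{\text{unw}}) < \infty$ implicitly assumes the same, so the statement should be interpreted under that hypothesis (which holds in our application via Lemma \ref{lem: spectral gap} since the graph there is a connected circulant-type graph whenever $\delta \geq 2$).
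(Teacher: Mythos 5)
Your proof is correct and follows essentially the same route as the paper's: both reduce the theorem to Lemma \ref{Weighted Spectral Bound} by establishing $\text{diam}^{-1}(G) \leq \text{diam}(G_{\text{unw}})/W_{\min}$, with the only difference being that you prove this comparison directly via a shortest unweighted path, whereas the paper phrases it through the auxiliary inverse-weight graph $G'$. Your remark about the implicit connectedness hypothesis is a fair observation and consistent with how the result is used in Lemma \ref{lem: spectral gap}.
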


\begin{proof} Let $G' = (V,E,\mathbf{W}')$, where $W'_{i,j}=1/W_{i,j}$ if $W_{i,j}\neq 0$ and $W'_{i,j}=0$ otherwise.  Let $W'_{\max}$ be the maximum element of $\mathbf{W}'$. Observe that by construction, we have $W'_{\text{max}} = \dfrac{1}{W_{\min}}$. Moreover, it follows immediately from Definition  \ref{def: basics weighted} that we have $\text{diam}^{-1}(G)= \text{diam}(G')$. Therefore, 
\begin{align*}
\text{diam}^{-1}(G) = \text{diam}(G') \leq W'_{\max} \cdot \text{diam}(G_{\text{unw}}) = \dfrac{1}{W_{\min}} \text{diam}(G_{\text{unw}}).
\end{align*}

So by Lemma \ref{Weighted Spectral Bound}, we have that
\begin{align*}
\tau_G \geq \dfrac{2 \cdot W_{\min}}{W_{\text{max}}(n-1) \cdot \text{diam}^{-1}(G)} \geq \dfrac{2 \cdot (W_{\min})^2}{W_{\text{max}}(n-1)\text{diam}(G_{\text{unw}})}.
\end{align*}
\end{proof}



\end{document}